\theoremstyle{remark}
\newtheorem{example}{\textbf{Example}}[section]
\numberwithin{equation}{section}
\newcommand\figcaption{\def\@captype{figure}\caption}
\newcommand\tabcaption{\def\@captype{table}\caption}
\def\bq{\begin{equation}}
\def\eq{\end{equation}}
\def\bqs{\begin{equation*}}
\def\eqs{\end{equation*}}
\def\bsqs{\begin{subequations}}
\def\esqs{\end{subequations}}
\def\ba{\begin{aligned}}
\def\ea{\end{aligned}}
\def\br{\begin{eqnarray}}
\def\er{\end{eqnarray}}
\def\brr{\bq\begin{array}{rlll}}
\def\err{\end{array}\eq}
\def\text#1{\hbox{#1}}
\newtheorem{thm}{Theorem}[section]
\newtheorem{lem}[thm]{Lemma}
\newtheorem{coro}{Corollary}[section]
\newtheorem{rem}[thm]{Remark}
\newtheorem{alg}[thm]{Algorithm}
\newtheorem{definition}{Definition}[section]
\newcommand{\bsub}{\begin{subequations}}
\newcommand{\esub}{\end{subequations}$\!$}
\newcommand{\maK}{{\mathcal K}}
\title[FEMs for elliptic equations with line Dirac sources]{Regularity and finite element approximation for two-dimensional elliptic equations with line Dirac sources}
\author[H.~Li, X. ~Wan, P. Yin, L. Zhao]{Hengguang Li$^\dagger$, Xiang Wan$^\ddagger$, Peimeng Yin$^\dagger$ and Lewei Zhao$^\S$}
\address{$^\dagger$ Wayne State University, Department of Mathematics, Detroit, MI 48202} \email{li@wayne.edu; pyin@wayne.edu; Lewei.Zhao@wayne.edu}
\address{$^\ddagger$ George Washington University, Department of Mathematics, Washington, DC 20052} \email{xiangwan@gwu.edu}
\address{$^\S$ Beaumont Proton Therapy Center, Royal Oak, MI 48073} \email{Lewei.Zhao@beaumont.org}
\keywords{Weighted Sobolev space, finite element method, singular line, graded meshes}
\begin{document}

\begin{abstract} 
We study the elliptic equation with a line Dirac delta function as the source term subject to the Dirichlet boundary condition in a two-dimensional domain. Such a line Dirac measure causes different types of solution singularities in the neighborhood of the line fracture. We establish new regularity results for the solution in a class of weighted Sobolev spaces and propose finite element algorithms that approximate the singular solution at the optimal convergence rate. Numerical tests results are presented to justify the theoretical findings.
\end{abstract}

\maketitle

\bigskip



\section{Introduction}

Let $\Omega \subset \mathbb{R}^2$ be a polygonal domain and let  $\gamma$ be a line segment in $\Omega$.  Consider  the elliptic boundary value problem
\begin{equation}
\label{eq:Possion}
\left\{
\begin{aligned}
-\Delta u   =\delta_{\gamma}  &      \quad \text{in }  \Omega,    \\
u           =0  &       \quad \text{on }  \partial \Omega,
\end{aligned}
\right.
\end{equation}
where  the source term $\delta_\gamma$ is the line Dirac measure on $\gamma$, namely,
\[
\langle \delta_\gamma, v \rangle = \int_\gamma v(s)ds, \qquad \forall\ v \in L^2(\gamma).
\]
Such equations  occur in  many mathematical models including  monophasic flows in porous media, tissue perfusion or drug delivery by a network of blood vessels \cite{DAngelo12} and elliptic optimal control problems with controls acting on a lower dimensional manifold \cite{Gong14}.   Note that the line Dirac measure $\delta_\gamma$ is not an $L^2$ function.  Although  the solution tends to be  smooth in a large part of  the domain, it can become singular in the region close to the  one-dimensional (1D) fracture $\gamma$ and in the region close to the vertices of the domain, where the corner singularities are expected to rise. Since the corner singularity  associated to equation (\ref{eq:Possion}) is understood fairly well in the literature, we shall  address the concerns on the regularity of the solution near $\gamma$ and on the efficacy    of the numerical approximation.



Finite element approximations for second order elliptic equations with singular source terms have attracted considerable attention and many studies have focused on point singular measures. Babu\v{s}ka \cite{Babuska71},  Scott \cite{Scott73, Scott76}, and Casas \cite{Casas85} studied the convergence in the $L^2$ (or $H^\epsilon$ with small $\epsilon$) norm for Dirac measures centered at some points in 2D; and a review of the convergence rates can be found in \cite{Koppl14}, in which the authors considered the Dirac measures centered at some points in both 2D and 3D and showed that for $P_1$ finite elements quasi-optimal order and for higher order finite elements optimal order a priori estimates on a family of quasi-uniform meshes in $L^2$-norm on a subdomain excludes the locations of the delta source terms. For a Dirac measure centered at a point in a $N$-dimensional domain with $N\geq 2$, locally refined meshes around the singular point were used in \cite{Eriksson85} to improve the convergence rate. Graded meshes were used in \cite{Apel11} to study the convergence rate of the finite element approximation for a point Dirac measure in 2D and $L^2$ error estimate of order $h^2|\ln h |^\frac{3}{2}$ was obtained for approximations based on $P_1$ polynomials. More recently, 1D singular source terms have also attracted some attention. By assuming the regularity of an elliptic equation in 3D with a Dirac measure concentrated on a 1D fracture in a weighted Sobolev space, optimal finite element convergence rates were obtained in \cite{DAngelo08, DAngelo12} by using graded meshes. Then the authors in \cite{Ariche16} derived the 3D regularity for the simplified equation in \cite{DAngelo08, DAngelo12} when the Dirac measure concentrated on a line or segment fracture.


In this paper, we  derive  regularity estimates  and propose optimal  finite element algorithms for equation (\ref{eq:Possion}). In particular, we investigate the solution regularity in a class of Kondratiev-type weighted spaces.  Note that the smoothness of the solution vary in different parts of the domain: the region close to the vertices, the neighborhood of the fracture $\gamma$, and the rest of the domain (Remark \ref{rk31}). By studying the local problem that inherits the line Dirac measure from equation (\ref{eq:Possion}), we obtain a ``full-regularity" estimate in these weighted spaces  in the neighborhood of $\gamma$. The key idea is to exploit the connection between the line Dirac measure and proper elliptic transmission problems in these weighted spaces. Based on the new regularity results and the existing regularity estimates on corner singularities, we in turn propose graded mesh refinement algorithms, such that the associated finite element methods of any order recover the optimal convergence rate in the energy norm even when the solution is singular. We study the model problem  (\ref{eq:Possion}) with a simple line fracture to simplify the exposition and avoid nonessential  complications in analysis. These results can be extended to more general cases, including the case where the single line fracture is replaced by multiple line fractures, whether intersecting or non-intersecting. With proper modifications, we also expect these analytical tools will be useful in the case when $\gamma$ is a smooth curve and when the source term $\delta_{\gamma}$ is replace by $q\delta_{\gamma}$ for  $q\in L^2({\gamma})$.

The rest of the paper is organized as follows. In Section \ref{sec-2}, we discuss the well-posedness and global regularity of  equation \eqref{eq:Possion} in Sobolev spaces. In Section \ref{sec-3}, we introduce the weighted spaces and derive the regularity estimates for the solution in the neighborhood of $\gamma$.  The main regularity results,  summarized in Theorem \ref{ureg}, imply that in addition to the lack of regularity in the direction across $\gamma$, the solution also possesses isotropic singularities at the endpoints of the line fracture. In Section \ref{sec-4}, we propose the finite element approximation of equation (\ref{eq:Possion}) based on a simple and explicit construction of graded meshes (Algorithm \ref{graded} and Remark \ref{rkgraded}). We further show that the proposed numerical methods achieve the optimal convergence rate by local interpolation error analysis in weighted spaces. We present various numerical test results in Section \ref{sec-5} to validate the theory.

Throughout the text below, we  denote by $ab$ the line segment with endpoints $a$ and $b$. The generic constant $C>0$ in our estimates  may be different at different occurrences. It will depend on the computational domain, but not on the functions  involved or the mesh level in the finite element algorithms.


\section{Well-posedness and regularity in Sobolev spaces} \label{sec-2} 
\subsection{Well-posedness of the solution}
Denote by $H^m(\Omega)$, $m\geq 0$,  the Sobolev space that consists of functions whose $i$th ($0\leq i\leq m$) derivatives are square integrable. Let $L^2(\Omega):=H^0(\Omega)$.
Denote by $H^1_0(\Omega)\subset H^1(\Omega)$  the subspace consisting of functions with  zero trace on the boundary  $\partial\Omega$. The variational formulation for equation (\ref{eq:Possion}) is
\begin{eqnarray}\label{eqn.weak}
a(u, v):=\int_\Omega\nabla u\cdot \nabla v dx=\langle \delta_\gamma, v \rangle, \quad \forall\ v\in H^1_0(\Omega).
\end{eqnarray}
According to the trace estimate \cite{LionsMaganesVolI}, $v|_\gamma$ is well defined in $L^2(\gamma)$ for $v\in H^1(\Omega)$. Therefore, it is clear that there exists a unique solution $u\in H^1_0(\Omega)$ defined by (\ref{eqn.weak}). However, the solution has limited regularity because the singular source term $\delta_\gamma\notin L^2(\Omega)$. In the rest of this section, we  present the global  regularity estimates for the solution in the domain.

\subsection{Regularity in  Sobolev spaces}
We begin with the regularity estimates of problem (\ref{eq:Possion}) in  Sobolev spaces $H^m$. We  first have the following  result regarding the line Dirac measure $\delta_\gamma$.
\begin{lem}
\label{lemma2-1}
Let $\Omega \subset \mathbb{R}^2$ be a bounded domain. Then
$\delta_\gamma \in H^{-\frac{1}{2}-\epsilon}(\Omega)$ for any  $\epsilon > 0$.
\end{lem}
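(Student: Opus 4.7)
The plan is to show that the distribution $v\mapsto \int_\gamma v(s)\,ds$, initially defined on $C_c^\infty(\Omega)$, extends continuously to $H^{1/2+\epsilon}_0(\Omega)$ for every $\epsilon>0$. By definition of the negative–order Sobolev space as the topological dual, this will immediately place $\delta_\gamma$ in $H^{-1/2-\epsilon}(\Omega)$.

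The execution has three essentially independent steps. First, I would invoke the interior trace theorem: for any $s>\tfrac{1}{2}$ and any smooth one-dimensional submanifold $\Sigma\subset\overline{\Omega}$, the restriction operator $v\mapsto v|_\Sigma$ is bounded $H^s(\Omega)\to H^{s-1/2}(\Sigma)$. Applying this with $s=\tfrac{1}{2}+\epsilon$ to a smooth arc $\Sigma$ containing $\gamma$ and restricting to the segment gives
\begin{equation*}
\|v|_\gamma\|_{H^{\epsilon}(\gamma)} \leq C\,\|v\|_{H^{1/2+\epsilon}(\Omega)}.
\end{equation*}
Second, because $\gamma$ is a bounded interval, the continuous embedding $H^{\epsilon}(\gamma)\hookrightarrow L^2(\gamma)$ is immediate. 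Third, Cauchy--Schwarz gives
\begin{equation*}
\bigl|\langle \delta_\gamma,v\rangle\bigr| = \Bigl|\int_\gamma v(s)\,ds\Bigr| \leq |\gamma|^{1/2}\|v\|_{L^2(\gamma)} \leq C\,|\gamma|^{1/2}\|v\|_{H^{1/2+\epsilon}(\Omega)}.
\end{equation*}
Chaining the three estimates, the functional $v\mapsto\langle\delta_\gamma,v\rangle$ is bounded on $H^{1/2+\epsilon}(\Omega)$, proving the claim.

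The only delicate point, and the main obstacle worth flagging, is that $\gamma$ is a segment with endpoints, not a closed manifold without boundary, and it may touch $\partial\Omega$; the cleanest version of the trace theorem is stated for a full smooth curve. To sidestep this I would embed $\gamma$ into a slightly longer smooth arc $\tilde\gamma$ still inside $\overline{\Omega}$ (or extend $v$ by a bounded extension operator to $\mathbb{R}^2$ and take the trace on a line through $\gamma$), apply the trace theorem there, and then restrict back to $\gamma$ — the restriction $H^{\epsilon}(\tilde\gamma)\to H^{\epsilon}(\gamma)$ is bounded for any $\epsilon\ge 0$. Once this technical point is handled, the rest is a one–line Cauchy--Schwarz computation, so the real content of the lemma is simply the classical Lions--Magenes interior trace result combined with the fact that the codimension of $\gamma$ in $\Omega$ is one.
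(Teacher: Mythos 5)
Your proposal is correct and follows essentially the same route as the paper: both arguments reduce the claim to the duality definition of $H^{-1/2-\epsilon}(\Omega)$, bound $\int_\gamma v\,ds$ by $|\gamma|^{1/2}\|v\|_{L^2(\gamma)}$ via Cauchy--Schwarz/H\"older, and control $\|v\|_{L^2(\gamma)}$ by $\|v\|_{H^{1/2+\epsilon}(\Omega)}$ through the Lions--Magenes trace theorem. Your version merely makes explicit the intermediate space $H^\epsilon(\gamma)$ and the endpoint/extension technicality, which the paper absorbs into its citation of the trace estimate.
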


\begin{proof}
The proof is based on the duality pairing (cf. \cite{LionsMaganesVolI}). Given  $\epsilon > 0$ and  $v \in H^{\frac{1}{2}+\epsilon}(\Omega)$,  by H\"older's inequality and the trace estimate \cite{Kesavan89, LionsMaganesVolI}, we have
\[
\langle \delta_\gamma, v \rangle = \int_\gamma v(s)ds \leq C \|v\|_{L^2(\gamma)} \leq  C \|v\|_{H^{\frac{1}{2}+\epsilon}(\Omega)}.
\]
Therefore, by the standard definition, we have
\[
\|\delta_{\gamma}\|_{H^{-\frac{1}{2}-\epsilon}(\Omega)} := \sup \{\langle \delta_\gamma, v \rangle \ : \  \|v\|_{H^{\frac{1}{2}+\epsilon}(\Omega)} = 1\} \leq C,
\]
which completes the proof.
\end{proof}

Consequently, we have the following global regularity estimate for the solution.
\begin{lem}
\label{thm2-2}
Given  $\epsilon>0$, the solution of equation \eqref{eq:Possion} satisfies $u \in H^{\frac{3}{2}-\epsilon}(\Omega) \cap H^1_0(\Omega)$.
\end{lem}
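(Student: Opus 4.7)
The approach is to combine Lemma \ref{lemma2-1} with the classical elliptic shift estimate for the Dirichlet Laplacian on a two-dimensional polygonal domain. By Lemma \ref{lemma2-1}, one has $\delta_\gamma \in H^{-1/2-\epsilon}(\Omega)$ for every $\epsilon > 0$, while the weak formulation \eqref{eqn.weak} together with the Lax--Milgram theorem already gives the unique solution $u \in H^1_0(\Omega)$. What remains is to upgrade this $H^1$ regularity to $H^{3/2-\epsilon}$.

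First, I would invoke the standard shift theorem for the Dirichlet problem on a polygonal domain: the solution operator $(-\Delta)^{-1}$ extends to a bounded map $H^{-1+s}(\Omega) \to H^{1+s}(\Omega) \cap H^1_0(\Omega)$ for every $s \in [0, s_0)$, where the critical index $s_0 = \min_j \pi/\omega_j$ is governed by the interior angles $\omega_j$ of $\partial \Omega$ (see, e.g., Grisvard's treatment via the decomposition of the solution into a regular part and explicit corner singular functions). Since every interior angle of a polygon satisfies $\omega_j \leq 2\pi$, we have $s_0 \geq 1/2$.

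Next, I would apply this estimate with $s = 1/2 - \epsilon$ and with right-hand side $f = \delta_\gamma \in H^{-1+s}(\Omega)$. Since $s = 1/2-\epsilon < 1/2 \leq s_0$ for any $\epsilon > 0$, the shift theorem produces $u \in H^{1+s}(\Omega) \cap H^1_0(\Omega) = H^{3/2-\epsilon}(\Omega) \cap H^1_0(\Omega)$, as desired.

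The main obstacle lies in the borderline geometry where the polygon contains a re-entrant corner whose opening angle approaches $2\pi$ (as in a slit domain), in which case $s_0 = 1/2$ and the shift estimate degenerates precisely at the endpoint $s = 1/2$. This difficulty is circumvented by the presence of the arbitrary $\epsilon > 0$ in the statement, which keeps $s = 1/2 - \epsilon$ strictly below the critical exponent, so that no corner singular function is activated and the regular part alone carries the full estimate.
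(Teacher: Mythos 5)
Your proposal is correct and follows the same route as the paper: establish $\delta_\gamma \in H^{-1/2-\epsilon}(\Omega)$ via Lemma \ref{lemma2-1} and then apply elliptic regularity. The paper simply cites ``standard elliptic regularity theory'' for the second step, whereas you spell out the polygonal shift theorem and verify that $s=1/2-\epsilon$ stays below the critical index $\min_j \pi/\omega_j \geq 1/2$, which is a worthwhile (and correct) elaboration of the same argument.
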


\begin{proof}
From Lemma \ref{lemma2-1}, it follows $\delta_\gamma \in H^{-\frac{1}{2}-\epsilon}(\Omega)$. Then the standard elliptic regularity theory \cite{Alinhac07} leads to the conclusion.
\end{proof}

%


Thus, by  Lemma \ref{thm2-2} and the Sobolev embedding theorem \cite{Ciarlet74}, we obtain
\begin{coro}\label{co1}
The solution $u$ of equation \eqref{eq:Possion} is H\"{o}lder continuous $u\in C^{0,1/2-\epsilon}(\Omega)$ for any small $\epsilon>0$. In particular, we have  $u\in C^0({\Omega})$.
\end{coro}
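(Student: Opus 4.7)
The plan is to deduce the corollary as an immediate consequence of Lemma~\ref{thm2-2} together with a standard two-dimensional Sobolev embedding. First I would invoke Lemma~\ref{thm2-2} to record that $u \in H^{3/2-\epsilon}(\Omega)$ for every small $\epsilon>0$. Next I would apply the embedding $H^s(\Omega) \hookrightarrow C^{0,\alpha}(\Omega)$, valid on Lipschitz domains in $\mathbb{R}^n$ whenever $s - n/2 = \alpha \in (0,1)$. Since $\Omega$ is a polygonal (hence Lipschitz) planar domain, taking $n=2$ and $s = 3/2 - \epsilon$ produces the exponent $\alpha = 1/2-\epsilon$ and yields $u \in C^{0,1/2-\epsilon}(\Omega)$, which is the first assertion.

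The inclusion $u \in C^0(\Omega)$ then follows trivially, since any H\"older continuous function is continuous; the two claims in the statement thus share the same one-line justification.

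I expect no substantive obstacle in this argument: it amounts to a direct citation of a textbook embedding theorem. The only hypothesis that warrants a moment of verification is the boundary regularity of $\Omega$ required by the embedding, and this is automatic because polygonal domains are Lipschitz. It is worth noting that the H\"older exponent $1/2-\epsilon$ produced by this route cannot be improved without first strengthening Lemma~\ref{thm2-2}, whose Sobolev exponent $3/2-\epsilon$ reflects the genuine limitation $\delta_\gamma \in H^{-1/2-\epsilon}(\Omega)$ established in Lemma~\ref{lemma2-1}; the sharpness of that bound is in turn tied to the fact that the trace operator only barely maps $H^{1/2+\epsilon}(\Omega)$ into $L^2(\gamma)$.
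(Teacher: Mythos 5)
Your argument is exactly the paper's: the corollary is obtained by combining Lemma~\ref{thm2-2} with the Sobolev embedding $H^{s}(\Omega)\hookrightarrow C^{0,s-1}(\Omega)$ for $s=3/2-\epsilon$ on the (Lipschitz) polygonal domain, and continuity follows from H\"older continuity. The proposal is correct and adds only the explicit verification of the embedding hypotheses, which the paper leaves implicit.
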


Based on Lemma \ref{thm2-2} and Corollary \ref{co1}, the solution is merely in $H^{\frac{3}{2}-\epsilon}(\Omega)$ for $\epsilon>0$. The lack of regularity is largely due to the singular line Dirac measure $\delta_\gamma$ in the source term. However, regularity is a local property. Such solution  singularity shall occur only in the neighborhood of $\gamma$. In a large part of the domain, the solution is reasonably smooth. Hence, we shall study the regularity of equation (\ref{eq:Possion}) in some weighted Sobolev spaces that can accurately characterize the local behavior of the solution.


\section{Regularity estimates in weighted spaces}
\label{sec-3}
Recall the domain $\Omega$ and the line segment $\gamma$ in equation (\ref{eq:Possion}). Without loss of generality, we assume  $\gamma=\{(x,0), \ 0< x<1\}$ with the endpoints $Q_1=(0,0)$ and  $Q_2=(1,0)$ as shown in Figure \ref{fig:Omega}. Let  $\mathcal{V}$ be the singular set, which is the collection of $Q_1$, $Q_2$, and all the vertices of $\Omega$. In this section, we first study an auxiliary transmission problem in Subsections \ref{31} and \ref{32}. Then, we  obtain the regularity estimates for equation (\ref{eq:Possion}) in Subsection \ref{33}.
\subsection{The transmission  problem}\label{31}

Consider the equation
\begin{equation}
\label{eq:2d}
\left\{
\begin{aligned}
-\Delta w=0 &\quad \mbox{ in   }\Omega \setminus \gamma,\\
w_y^+=w_y^--1 &\quad \mbox{ on   } \gamma,\\
w^+=w^- &\quad \mbox{ on   } \gamma,\\
w=0 &\quad \mbox{ on   } \partial \Omega,
\end{aligned}
\right.
\end{equation}
where  $w_y=\partial_yw$.
Here, for a function $v$,
$
v^\pm:=\lim_{\epsilon\rightarrow 0}v(x,y\pm\epsilon).
$
It is clear that equation (\ref{eq:2d}) has a unique weak solution $$w\in H^1(\Omega\setminus \gamma)\cap \{w|_{\partial\Omega}=0\}.$$

\begin{rem}\label{rk31}We define different regions of the domain  as follows for further local regularity estimates.
Denote by $\mathbb{H}^+$ and $\mathbb{H}^-$ the upper and lower half planes, respectively. Define $\gamma_0=\{(x,0): d \leq x \leq 1-d\} \subset \gamma$ for some small $d>0$. Then we choose two open subsets $\Omega^+\subset \Omega \cap {\mathbb{H}^+}$ and $\Omega^-\subset \Omega \cap {\mathbb{H}^-}$, each of whom has a  smooth boundary and is away from $\partial\Omega$,  such that
$\gamma_0 = \overline{\Omega^+}\cap \overline{\Omega^-}$. Let $B(x_0, r)$ be the ball centered at $x_0$ with radius $r$.
Denote by $B_i=B(Q_i,2d)$, $i=1, 2$, the neighborhoods around the endpoints of $\gamma$.
 See Figure \ref{fig:decom_L_R}. We assume $d$ is sufficiently small such that $B_1\cap B_2=\emptyset$ and  $(B_1\cup B_2)\cap \partial \Omega=\emptyset$.  Therefore,  the domain $\Omega$ is divided into three  regions: (i) the interior region $R_1={\Omega^+} \cup {\Omega^-}$ away from the set $\mathcal V$, (ii) the region $R_2=B_1\cup B_2$ consisting of the neighborhoods of the endpoints of $\gamma$, and (iii) $R_3=\Omega\setminus (\bar R_1\cup \bar R_2)$ is the region close to the boundary $\partial \Omega$.
 \end{rem}
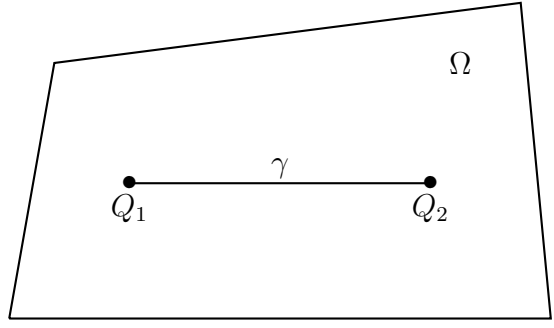
\begin{figure}
\begin{center}
\begin{tikzpicture}[scale=0.2]
\draw[thick]
(-18,-11) -- (-15,6) -- (16,10) -- (18,-11) -- (-18,-11);
\draw[thick] (-10,-2) node {$\bullet$} node[anchor = north] {$Q_1$} --
(10,-2) node {$\bullet$} node[anchor = north] {$Q_2$};

\draw (12,6) node {$\Omega$};
\draw(0,-1) node{$\gamma$};
\end{tikzpicture}
\end{center}
\vspace*{-15pt}
    \caption{Domain $\Omega$ containing a line fracture $\gamma$.}
    \label{fig:Omega}
\end{figure}




\begin{rem}\label{r3} In region $R_3$, the solution regularity in (\ref{eq:2d}) is determined by the geometry of the domain. In particular, the solution can possess singularities near the non-smooth points (vertices) of the boundary. The regularity estimates in this region is well understood in the literature. See for example \cite{Apel99, Dauge88, Grisvard85, Kondratiev67, Li10} and references therein. Therefore, we shall concentrate on the regularity analysis in regions $R_1$ and $R_2$ for equation (\ref{eq:2d}).
\end{rem}

\begin{figure}
\begin{center}
\begin{tikzpicture}[scale=0.22]


\draw[thick]
(-18,-11) -- (-15,6) -- (16,10) -- (18,-11) -- (-18,-11);

\draw (-10,-2) -- (10,-2);

\filldraw[color=red!60, fill=red!15] (-8,-2)
.. controls (-10,-3) and (-10,-5) .. (-8,-6)
.. controls (-4, -8) and (4,-8) .. (8,-6)
.. controls (10, -5) and (10,-3) .. (8,-2);

\filldraw[color=red!60, fill=red!15] (-8,-2)
.. controls (-10,-1) and (-10,1) .. (-8,2)
.. controls (-4, 4) and (4,4) .. (8,2)
.. controls (10, 1) and (10,-1) .. (8,-2) ;

\draw[thick]
(-8,-2) node {$\boldsymbol{\cdot}$}  node[anchor = north] {$d$} --
(8,-2) node {$\boldsymbol{\cdot}$} node [anchor = north] {$1-d$};

\draw[ultra thick]  (-10,-2) node {$\boldsymbol{\cdot}$};
\draw[ultra thick]  (10,-2) node {$\boldsymbol{\cdot}$};

\draw (-11.5,-2.5) node {$Q_1$};
\draw (11.5,-2.5) node {$Q_2$};

\draw (-10,-2) circle (4);
\draw (10,-2) circle (4);

\draw (12,6) node {$\Omega$};
\draw (0,2) node {$\Omega^+$};
\draw (0,-6) node {$\Omega^-$};
\draw(0,-3) node{$\gamma_0$};
\draw(-11,1) node{$B_1$};
\draw(11,1) node{$B_2$};
\end{tikzpicture}
\end{center}
\vspace*{-15pt}
    \caption{Decomposition around the singular line: $\Omega^+, \Omega^-, B_1$ and $B_2$.}
    \label{fig:decom_L_R}
\end{figure}
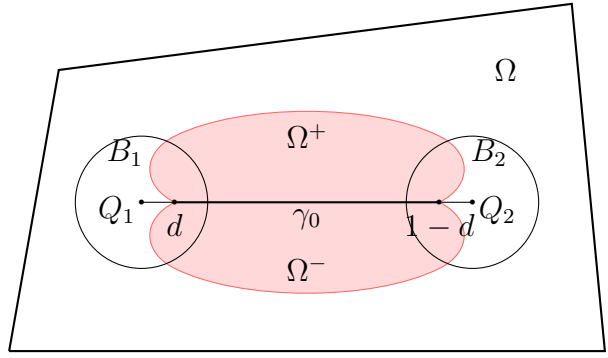

We now  introduce a class of  Kondratiev-type weighted spaces for the analysis of equation (\ref{eq:2d}).
\begin{definition} \label{wss} (Weighted Sobolev spaces) Recall the set $\mathcal V$ that consists of the endpoints of $\gamma$ and all the vertices of the domain $\Omega$.
Let $r_i(x,Q_i)$ be the distance from $x$ to $Q_i \in \mathcal{V}$ and let
\begin{eqnarray}\label{eqn.rho}
\rho(x)=\Pi_{Q_i\in \mathcal{V}} r_i(x,Q_i).
\end{eqnarray}
For $a\in\mathbb R$, $m\geq 0$, and $G\subset \Omega$,  we define the weighted Sobolev space
$$
\maK_{a}^m(G) := \{v,\ \rho^{|\alpha|-a}\partial^\alpha v\in L^2(G), \forall\ |\alpha|\leq m \},
$$
where the multi-index $\alpha=(\alpha_1,\alpha_2)\in\mathbb Z^2_{\geq 0}$, $|\alpha|=\alpha_1+\alpha_2$, and $\partial^\alpha=\partial_x^{\alpha_1}\partial_y^{\alpha_2}$.
The $\maK_{a}^m(G)$ norm for $v$  is defined by
$$
\|v\|_{\maK_{a}^m(G)}=\big(\sum_{|\alpha|\leq m}\iint_{G} |\rho^{|\alpha|-a}\partial^\alpha v|^2dxdy\big)^{\frac{1}{2}}.
$$
\end{definition}
\begin{rem}
According to Definition \ref{wss}, in the region that is away from the set $\mathcal V$, the weighted space $\maK^m_a$ is equivalent to the Sobolev space $H^m$. In the region $R_3$ (see Remark \ref{rk31}) that is close to the vertices of the domain, the space $\maK^m_a$ is the same Kondratiev space for analyzing corner singularities \cite{Dauge88,Grisvard85,Kondratiev67}. In contrast to the Kondratiev space where the weight is the distance function to the vertex set, the weight in the space $\maK^m_a$ also consists of the distance function to the endpoints of $\gamma$. In particular, for $i=1,2$, in the neighborhood $B_i$ (Figure \ref{fig:decom_L_R}) of an endpoint $Q_i$ of $\gamma$, the weighted space can be written as
\begin{equation*} \label{def_weighted_Sobolev}
\maK_a^m(B_i) =
\{v, r_i^{|\alpha|-a}\partial^\alpha v\in L^2(B_i), \forall\ |\alpha|\leq m \}.
\end{equation*}
\end{rem}

In each $B_i$, we further define $\chi_i \in C_0^\infty(B_i)$ that satisfies
\begin{equation*}
\label{WS}
\chi_i=\left\{
\begin{aligned}
1&      \quad \mbox{ in $B(Q_i,d)$},    \\
0&     \quad \mbox{ on }  \partial B_i.
\end{aligned}
\right.
\end{equation*}
Note that supp$(\chi_1)\cap$supp$(\chi_2)=\emptyset$. In addition, we denote by
\begin{eqnarray}\label{w}W= {\rm{span}}\{\chi_i\}, \quad i=1, 2, \end{eqnarray} the linear span of these two functions.



\subsection{Regularity estimates for equation (\ref{eq:2d})}\label{32} We now proceed to carry out the regularity analysis for the transmission problem  (\ref{eq:2d}). Recall the interior region $R_1={\Omega^+} \cup {\Omega^-}$ in Remark \ref{rk31}. We start with the regularity analysis for the solution  in $R_1$.
\begin{lem}\label{waway}
The solution of equation   \eqref{eq:2d} is smooth in either ${\Omega^+}$ or in  ${\Omega^-}$. Namely, for any $m\geq 1$, $w\in H^{m+1}({\Omega^+})$ and  $w\in H^{m+1}({\Omega^-})$.
\end{lem}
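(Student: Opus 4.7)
The plan is to reduce the transmission problem locally, in a neighborhood of the open segment $\gamma_0$, to an ordinary harmonic equation on a full two-dimensional set by absorbing the prescribed jump in the normal derivative into an explicit elementary function. First, I would fix an open neighborhood $U\subset\Omega$ of $\gamma_0$ that avoids both endpoints $Q_1,Q_2$ and the outer boundary $\partial\Omega$, and introduce the auxiliary function
\[
v := w + \tfrac{1}{2}|y| \qquad \text{in } U.
\]
Using $\partial_y|y|=\pm 1$ on $\mathbb{H}^\pm$ together with the transmission conditions $w^+=w^-$ and $w_y^+=w_y^--1$ from \eqref{eq:2d}, a direct check gives
\[
v^+ - v^- = 0, \qquad v_y^+ - v_y^- = (w_y^+ - w_y^-) + 1 = 0 \quad \text{on } \gamma_0.
\]
Thus $v$ is an $H^1(U)$ function whose value and normal-derivative traces from both sides of $\gamma_0$ match.

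Next, I would verify that $\Delta v=0$ in $\mathcal D'(U)$. Classically $\Delta w=0$ and $\Delta(|y|/2)=0$ on $U\setminus\gamma$, so $\Delta v=0$ pointwise off $\gamma$; the standard formula for the distributional Laplacian of a piecewise-smooth function shows that the only additional contributions supported on $\gamma_0$ are driven by the jumps of $v$ and $v_y$, both of which vanish by the previous step. Hence $\Delta v=0$ in $\mathcal D'(U)$, and Weyl's lemma (equivalently, interior elliptic regularity for the Laplacian) yields $v\in C^\infty(U)$.

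From here the lemma follows quickly. On $\Omega^+\cap U$ one has $|y|=y$, which is smooth, so the identity $w=v-y/2$ shows that $w$ is $C^\infty$ up to $\gamma_0$ from inside $\Omega^+$; the case of $\Omega^-$ is symmetric. The remaining part of $\partial\Omega^+$ is a smooth arc that sits strictly inside the open set $\Omega\cap\mathbb{H}^+\subset\Omega\setminus\gamma$, where $w$ is already harmonic; I would thicken $\Omega^+$ slightly across this arc while staying in $\Omega\setminus\gamma$ and invoke classical interior elliptic regularity for harmonic functions to conclude smoothness up to that boundary piece as well. Combining these pieces with interior smoothness at the interior points of $\Omega^+$ gives $w\in C^\infty(\overline{\Omega^+})$, which is strictly stronger than $w\in H^{m+1}(\Omega^+)$ for every $m\ge 1$; the argument for $\Omega^-$ is identical.

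The key conceptual step is identifying the corrector $\tfrac12|y|$, after which everything reduces to direct verification of trace identities or applications of standard elliptic regularity. I do not expect a serious technical obstacle, but the one delicate point to get right is the distributional Laplacian calculation: one needs to check that the two $\delta_\gamma$-supported terms coming from $-\Delta w$ and from $\Delta(|y|/2)$ cancel exactly and with the correct sign, rather than reinforcing each other.
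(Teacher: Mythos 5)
Your argument is correct, and it reaches a slightly stronger conclusion ($w\in C^\infty(\overline{\Omega^+})$) by a more elementary route than the paper. Both proofs share the same basic strategy of subtracting a corrector that carries the prescribed jump in the normal derivative, but they diverge in how the corrector is chosen and in what regularity machinery is then needed. The paper constructs an abstract lift $w_0$ (piecewise $w_U$, $w_D$ obtained from the trace theorem) with $w_U=w_D$ and $\partial_y w_U=\partial_y w_D-1$ on $\gamma_0$; since $w_0$ is not harmonic, the residual $w-w_0$ still solves a transmission problem with source $\Delta w_0$ across the smooth interface, and the conclusion rests on the transmission-regularity results of \cite{Nicaise92, Li10}. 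Your explicit corrector $\tfrac12|y|$ is harmonic off $\gamma$ and its distributional Laplacian $\delta_{\{y=0\}}$ exactly cancels $\Delta w=-\delta_\gamma$, so the residual $v$ is genuinely harmonic in a full neighborhood of $\gamma_0$ and only Weyl's lemma is required --- a self-contained argument that avoids citing interface regularity theory, at the (mild) cost of being tied to the flat geometry of $\gamma$, whereas the paper's abstract lift would survive if $\gamma$ were a curved smooth interface. Two small points to tidy up: the jump formula for $\Delta v$ is cleanest if you derive $-\Delta w=\delta_\gamma$ in $\mathcal D'$ directly from the weak formulation of \eqref{eq:2d} rather than from classical traces of $w_y^{\pm}$, which are a priori only in $H^{-1/2}(\gamma)$; and the two points where $\partial\Omega^+$ detaches from $\gamma_0$ are not strictly interior to $\Omega\cap\mathbb{H}^+$, but they lie in your neighborhood $U$, so your covering argument still closes once you say so explicitly.
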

\begin{proof}
Recall that ${\Omega^+}$ and ${\Omega^-}$ are regions with a smooth boundary. Therefore, by the trace estimate, for $m\geq 1$, we can find two functions  $w_U\in H^{m+1}(\Omega^+)$ and $w_D\in H^{m+1}(\Omega^-)$ such that $w_U=w_D$ and $\frac{\partial w_U}{\partial y}=\frac{\partial w_D}{\partial y}-1$ on $\gamma_0:=\overline{{\Omega^+}} \cup \overline{{\Omega^-}}\subset\gamma$.

Define
\begin{equation}
w_0=
\left\{
\begin{aligned}
w_U \quad & \mbox{ in   } \Omega^+, \\
w_D \quad & \mbox{ in   } \Omega^-.
\end{aligned}
\right.
\end{equation}
Then $w-w_0$ satisfies the standard transmission problem with a smooth interface
\begin{equation}
\left\{\begin{aligned}
-\Delta(w-w_0)=\Delta w_0& \quad \mbox{ in   } (\Omega^+\cup \Omega^-),   \\
(w-w_0)_y^+=(w-w_0)_y^- &\quad \mbox{ on   } \gamma_0,\\
(w-w_0)^+=(w-w_0)^- & \quad \mbox{ on   } \gamma_0.
\end{aligned}\right.
\end{equation}
Therefore, by the regularity results in \cite{Nicaise92, Li10}, we have $w-w_0\in H^{m+1}(\Omega^+)$ and $w-w_0\in H^{m+1}(\Omega^-)$, which leads to the desired result.
\end{proof}

We now concentrate on the solution behavior in the neighborhood $B_i$, $i=1,2$, of an endpoint of $\gamma$ (see Remark \ref{rk31}).  We first consider the following  problem  with a simpler transmission  condition on $\gamma$,
\begin{equation}\label{Lisimiliar}
\left\{
\begin{aligned}
-\Delta z =f & \quad \text{in } B_i\setminus \gamma, \\
{z}_y^+={z}_y^- & \quad \text{on }  \gamma \cap B_i, \\
{z}^+={z}^- & \quad \text{on }  \gamma \cap B_i, \\
z=0 & \quad \text{on } \partial B_i.
\end{aligned}
\right.
\end{equation}
We recall a regularity result  in  \cite{Li10}   regarding $z$ in the neighborhood of $Q_i$.

\begin{lem}\label{singular} For equation (\ref{Lisimiliar}), there exists $b_{Q_i}>0$ such that the following statement holds.
Let $0<a<b_{Q_i}$ and $m \geq 1$. Assume $f \in \maK_{a-1}^{m-1}(B_i\setminus \gamma)$. Recall the finite dimensional space $W$ in (\ref{w}). Then, there exists a unique decomposition $z=z_{reg}+z_s$,
such that $z_{reg} \in  \maK_{a+1}^{m+1}(B(Q_i, d)\setminus\gamma)$ and $z_s \in W$.
Moreover, it follows
\bq
\|z_{reg}\|_{\maK_{a+1}^{m+1}(B(Q_i, d)\setminus\gamma)}+\|z_s\|_{L^\infty(B_i)} \leq C \|f\|_{\maK_{a-1}^{m-1}(B_i\setminus\gamma)},
\eq
where the constant $C>0$ is independent of $f$.
\end{lem}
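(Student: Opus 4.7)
The plan is to recognize that equation (\ref{Lisimiliar}) is effectively a standard Poisson problem and then apply a Kondratiev-type decomposition at the interior point $Q_i$. Since the transmission conditions prescribe zero jumps in both $z$ and its normal derivative across $\gamma\cap B_i$, the distributional Laplacian of $z$ carries no measure concentrated on $\gamma$, so $z$ is simply the weak solution of $-\Delta z = f$ in the whole disk $B_i$ with $z=0$ on $\partial B_i$. The line $\gamma$ is inert for the PDE; it only designates where the ``artificial'' conical point $Q_i$ sits and how the weighted space $\maK^m_a(B_i\setminus\gamma)$ is indexed.

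Next, I would carry out the standard Mellin analysis at $Q_i$. After localizing with a cutoff supported in $B_i$ and passing to polar coordinates centered at $Q_i$, the operator $-\Delta$ produces an operator pencil on the unit circle whose eigenvalues are $\pm k$, $k\in\mathbb{Z}_{\geq 0}$, coming from the Fourier modes $e^{ik\theta}$. Choosing $b_{Q_i}$ to be the first positive integer eigenvalue (namely $1$), the range $0<a<b_{Q_i}$ places the weight exponent strictly between the spectral values $0$ and $1$. This is exactly the regime in which $-\Delta:\maK^{m+1}_{a+1}\to\maK^{m-1}_{a-1}$ is Fredholm, with a one-dimensional cokernel generated by the constant Fourier mode at $Q_i$. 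A direct computation confirms the obstruction: a nonzero constant function multiplied by $\chi_i$ fails to lie in $\maK^{m+1}_{a+1}$ precisely because $\int_0^d r^{-2(a+1)}\,r\,dr$ diverges for $a>0$, while all higher derivatives of $\chi_i$ vanish near $Q_i$, so this constant mode is the only source of weighted singularity.

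To extract the decomposition, I would observe that $f\in\maK^{m-1}_{a-1}$ together with the boundary condition implies $z$ is continuous at $Q_i$ via the Sobolev embedding on the punctured disk, so the pointwise value $z(Q_i)$ is well defined and bounded by $\|f\|_{\maK^{m-1}_{a-1}}$. Setting $z_s := z(Q_i)\chi_i \in W$ and $z_{reg} := z-z_s$, the remainder vanishes at $Q_i$ and satisfies $-\Delta z_{reg}=f+\Delta z_s$, where $\Delta z_s$ is smooth and supported in the annulus $B_i\setminus B(Q_i,d)$. Applying the Fredholm isomorphism on the chosen weight line to $z_{reg}$ yields $z_{reg}\in\maK^{m+1}_{a+1}(B(Q_i,d)\setminus\gamma)$ with the asserted norm estimate, while uniqueness of the decomposition follows from the one-dimensional character of the kernel/cokernel structure at weight $a+1$.

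The main obstacle will be justifying the point value $z(Q_i)$ and matching conventions (the precise form of $b_{Q_i}$ and the one-dimensional singular space $W$) with the general decomposition theorem already available in \cite{Li10}. Beyond that bookkeeping, the content of the lemma is driven entirely by the standard Kondratiev theory applied to Poisson's equation at an interior point, and the proof ultimately reduces to invoking the cited reference once the reduction from (\ref{Lisimiliar}) to the unslit Poisson problem is noted.
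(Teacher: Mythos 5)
The paper offers no proof of this lemma at all: it is explicitly \emph{recalled} from \cite{Li10}, and the only in-text justification is the remark immediately following it, which identifies $b_{Q_i}=1$ from the eigenvalues $k^2$ of $-\partial_\theta^2$ on $(0,2\pi)$ with periodic boundary conditions. Your sketch reconstructs precisely the Kondratiev--Mellin mechanism behind that citation: the pencil exponents $\pm k$, the weight window $0<a<1$ strictly between the spectral values $0$ and $1$, and the constant mode as the sole obstruction (your computation that $\int_0^d r^{-2a-1}\,dr$ diverges for $a>0$ is exactly the reason the finite-dimensional space $W$ must be split off). In substance you and the paper are leaning on the same result, so the approach is not genuinely different --- you are supplying the interior of the black box the paper imports.

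Two caveats. First, the claim that $\gamma$ is ``inert'' and that (\ref{Lisimiliar}) reduces to the \emph{unslit} Poisson problem is valid for well-posedness and for $m=1$, but not for $m\geq 2$: the datum $f$ is only assumed to lie in $\maK^{m-1}_{a-1}(B_i\setminus\gamma)$ and may jump across the slit, so $z$ is generally not $H^{m+1}$ across $\gamma$. This is exactly why the conclusion is phrased in $\maK^{m+1}_{a+1}(B(Q_i,d)\setminus\gamma)$ and why the transmission framework of \cite{Li10} is the right tool rather than interior regularity for the whole disk; if you literally applied unslit Kondratiev theory you would need $f$ smooth across $\gamma$ and would lose the higher-order cases. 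Your angular pencil analysis survives unchanged, since the continuity conditions at $\theta=0,2\pi$ reproduce the periodic eigenvalue problem and hence $b_{Q_i}=1$, but the two-sided Mellin setup must be retained throughout. Second, extracting the point value $z(Q_i)$ by ``Sobolev embedding on the punctured disk'' \emph{before} the decomposition is established is circular as written; in the standard argument the decomposition comes first and the point value is then read off, because $\maK^{m+1}_{a+1}$ with $a>0$ and $m\geq 1$ embeds into continuous functions vanishing at $Q_i$, so $z_s$ is forced to equal $z(Q_i)\chi_i$ and uniqueness follows.
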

\begin{rem}Based on the calculation in \cite{Li10},  the constant $b_{Q_i}$  is determined by the smallest positive eigenvalue of  the operator $-\partial_\theta^2$ in $(0,2\pi)$ with the periodic boundary condition. Note that  $k^2$,  $k\in \mathbb{Z}_{\geq 0}$, are these eigenvalues. Thus, it follows $b_{Q_i}=1$.
\end{rem}

Recall the solution $w$ of the transmission problem (\ref{eq:2d}). Recall the space $W$ in (\ref{w}).  Then,   in the neighborhood $B_i$ of $Q_i$, $i=1,2$, we have the following regularity result.


\begin{thm}\label{lemma3-4}
Let $B_{d, i}:=B(Q_i, d)\subset B_i$, $i=1, 2$. Then, in $B_{d,i}$,  the solution $w$ of equation \eqref{eq:2d} admits a decomposition
$$
w=w_{reg}+w_s,
$$
where $w_s \in W$ and $w_{reg} \in \maK_{a+1}^{m+1}(B_{d,i}\setminus\gamma)$ for $0<a<1$ and $m\geq 1$. Moreover, we have
\bq\label{regularity}
\|w_{reg}\|_{\maK_{a+1}^{m+1}(B_{d, i}\setminus\gamma)}+\|w_s\|_{L^\infty(B_{i})} \leq C.
\eq
\end{thm}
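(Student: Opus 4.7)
The plan is to reduce equation (\ref{eq:2d}) near $Q_i$ to the homogeneous transmission problem (\ref{Lisimiliar}) by subtracting an explicit lift that absorbs the unit jump in $w_y$ across $\gamma$. Define $\phi(x,y) := -\tfrac{1}{2}|y|$ on $B_i$. A direct computation shows that $\phi$ is continuous across $\gamma$ with $\phi_y^+ - \phi_y^- = -1$ on $\gamma\cap B_i$, and that $-\Delta\phi = 0$ pointwise on $B_i\setminus\gamma$. Consequently $v := w - \phi$ is harmonic in $B_i\setminus\gamma$ and satisfies the homogeneous transmission conditions $v^+ = v^-$ and $v_y^+ = v_y^-$ on $\gamma\cap B_i$.

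To obtain a problem posed on $B_i$ with zero Dirichlet data, I would localize by setting $\tilde v := \chi_i v$. Smoothness of $\chi_i$ together with the homogeneous transmission conditions for $v$ imply that $\tilde v$ also has continuous trace and normal derivative on $\gamma\cap B_i$. Moreover $\tilde v = 0$ on $\partial B_i$, and in $B_i\setminus\gamma$
\[
-\Delta\tilde v = -2\nabla\chi_i\cdot\nabla v - v\,\Delta\chi_i =: \tilde f.
\]
The key observation is that $\tilde f$ is supported in $B_i\setminus B_{d,i}$, a region that stays at distance at least $d$ from $Q_i$. On this annular region $v$ extends harmonically across $\gamma$ by the homogeneous transmission conditions (a standard removable-singularity argument for harmonic functions agreeing on an interface in value and normal derivative), so interior elliptic regularity yields $v \in C^\infty$ near $\operatorname{supp}(\tilde f)$ with bounds controlled by $\|v\|_{L^2(B_i)}\leq \|w\|_{L^2(B_i)}+\|\phi\|_{L^2(B_i)}$, which is finite by Lemma \ref{thm2-2}. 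Since $\rho$ is bounded below on $\operatorname{supp}(\tilde f)$, we conclude $\|\tilde f\|_{\maK^{m-1}_{a-1}(B_i\setminus\gamma)}\leq C$.

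Now Lemma \ref{singular} applies to $\tilde v$. Since $0<a<1=b_{Q_i}$, it provides $\tilde v=\tilde v_{reg}+\tilde v_s$ with $\tilde v_{reg}\in\maK^{m+1}_{a+1}(B(Q_i,d)\setminus\gamma)$, $\tilde v_s\in W$, and the associated bound. Inside $B_{d,i}$ we have $\chi_i\equiv 1$, so $\tilde v = v = w-\phi$ and hence $w=\phi+\tilde v_{reg}+\tilde v_s$. A direct check shows $\phi=-\tfrac12|y|\in\maK^{m+1}_{a+1}(B_{d,i}\setminus\gamma)$ for any $0<a<1$, since $\partial^\alpha\phi$ vanishes on $B_{d,i}\setminus\gamma$ for $|\alpha|\geq 2$, while the remaining terms are controlled by $|y|\leq r_i$, which compensates the weight $\rho^{-(a+1)}$. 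Setting $w_{reg}:=\phi+\tilde v_{reg}$ and $w_s:=\tilde v_s$ gives the stated decomposition, and chaining the estimate from Lemma \ref{singular} with $\|\phi\|_{\maK^{m+1}_{a+1}(B_{d,i}\setminus\gamma)}\leq C$ yields (\ref{regularity}).

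The main technical obstacle is the localization step: one must carefully verify that multiplication by $\chi_i$ preserves the homogeneous transmission conditions and that the resulting source $\tilde f$ has a controlled $\maK^{m-1}_{a-1}$ norm. Both rest on the fact that $v$ extends smoothly across the portion of $\gamma$ lying in the annulus $B_i\setminus B_{d,i}$; once this is in hand, the remaining arguments are bookkeeping on weighted norms and an application of Lemma \ref{singular}.
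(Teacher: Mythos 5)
Your overall strategy (subtract an explicit lift absorbing the unit jump in $w_y$, localize with a cutoff, and invoke Lemma \ref{singular}) is the same as the paper's, but your choice of lift $\phi=-\tfrac12|y|$ breaks the argument at the crucial point. The fracture $\gamma$ terminates at $Q_i$, which is the \emph{center} of $B_i$, so the open set $B_i\setminus\gamma$ still contains the portion of the line $\{y=0\}$ lying beyond the endpoint (e.g.\ $\{(x,0):-2d<x\le 0\}$ for $Q_1$). Across that portion $w$ is smooth (no transmission condition is imposed there), while $\phi=-\tfrac12|y|$ still has the normal-derivative jump $\phi_y^+-\phi_y^-=-1$. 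Hence $v=w-\phi$ is \emph{not} harmonic in $B_i\setminus\gamma$: in the distributional sense $-\Delta v=-\delta_{\{y=0,\,x<0\}}$ there, i.e.\ you have merely moved the line Dirac source from $\gamma$ to its prolongation past $Q_i$. Consequently $\tilde v=\chi_i v$ does not satisfy the homogeneous transmission problem \eqref{Lisimiliar}, Lemma \ref{singular} does not apply, and the "removable-singularity" extension of $v$ across the interface in the annulus fails exactly on the ray $\{y=0\}\setminus\gamma$. For the same reason your final claim that $\phi\in\maK^{m+1}_{a+1}(B_{d,i}\setminus\gamma)$ is false for $m\ge1$: the second weak derivative $\partial_y^2\phi$ is a line measure supported on $\{y=0,\,x<0\}\cap B_{d,i}$, which is interior to $B_{d,i}\setminus\gamma$, so $\phi$ is not even in $H^2$ of that set.

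The fix is the one the paper uses: replace $-\tfrac12|y|$ by $p=-\eta\, r\sin\tfrac{\theta}{2}$ with $\theta\in[0,2\pi]$ measured from the ray containing $\gamma$. Near $\gamma$ one has $r\sin\tfrac{\theta}{2}\approx \tfrac12|y|$, so $p$ produces the same jump $p_y^+=p_y^--\eta$ across $\gamma$; but its branch cut is exactly $\gamma$ (at $\theta=\pi$ it equals $r$ and is smooth), so it introduces no spurious singularity on the prolongation of $\gamma$, it is harmonic up to the cutoff terms, and a direct computation gives $p\in\maK^{m+1}_{a+1}(B_1)$ and $\Delta(\eta r\sin\tfrac{\theta}{2})\in\maK^{m-1}_{a-1}(B_1)$ for all $m\ge1$ and $0<a<1$. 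With that single substitution your localization and the application of Lemma \ref{singular} go through essentially as you wrote them.
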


\begin{proof} We shall derive the theorem in $B_{d,1}$. The proof in $B_{d,2}$ can be carried out in a similar manner. Let ($r, \theta$) be the local polar coordinates  in $B_1$ for which $Q_1$ is at the origin and $\theta=0$ corresponds to the positive $x$-axis.  We shall use a localization argument to obtain the estimate. In the rest of the proof, we simplify the notation for $B_{d,1}$ by letting $B_d=B_{d, 1}$.

{\bf Step 1.} Let $\eta\in C^\infty_0(B_1)$ be a cutoff function such that $\eta=1$ in $B_d$, $\eta=0$ for $r>3d/2$, and $\eta_\theta:=\partial_\theta\eta=0$. Define $q:=\eta w$. Note that on $\gamma$ ($\theta=0, 2\pi$), we have
\begin{eqnarray*}
q_y^+&=&(\sin\theta)^{+} q_r^+ +\frac{(\cos\theta)^+}{r}q_\theta^+=\frac{1}{r}q_\theta^+\\
&=&\frac{1}{r}\eta w_\theta^+=\eta\left((\sin\theta)^{+} w_r^+ +\frac{(\cos\theta)^+}{r}w_\theta^+\right)=\eta w_y^+,
\end{eqnarray*}
where for a function $v(r, \theta)$,
$
v^\pm:=\lim_{\epsilon\rightarrow 0}v(r,\theta\pm\epsilon).
$
With a similar calculation, we have $q_y^-=\eta w_y^-$ on $\gamma$.
Then,  according to the transmission condition in equation (\ref{eq:2d}), we have
$$
q_y^+=\eta w^+_y=\eta(w^-_y-1)=q_y^--\eta, \qquad {\rm{on}}\ \gamma.
$$
Consequently, $q$ satisfies the following equation
%

\begin{equation}
\label{eq:polaro}
\left\{\begin{aligned}
-\Delta q=-\Delta(w\eta) & && \mbox{ in   } B_1\setminus\gamma,\\
q_y^+=q_y^--\eta & && \mbox{ on   } \gamma,\\
q^+=q^- &  && \mbox{ on   }\gamma,\\
q=0 & &&\mbox{ on }\partial B_1.
\end{aligned}\right.
\end{equation}
Note that based on the definition of $\eta$, in $B_1\setminus \gamma$, $-\Delta(w\eta)=-2\nabla w\cdot\nabla \eta-w\Delta \eta$ and in $B_d\setminus \gamma$, $-\Delta(w\eta)=0$.


{\bf Step 2.}
Define $p(r,\theta)=-\eta r \sin \frac{\theta}{2}$ for $0\leq\theta\leq2\pi$, where $\eta$ is defined in Step 1.
Then $p\in H^1(B_1)$ satisfies
\begin{equation}
\label{eq:polarv}
\left\{\begin{aligned}
-\Delta p= \Delta\left(\eta r\sin\frac{\theta}{2}\right) &  && \mbox{ in   } B_1\setminus\gamma,\\
p_y^+=(\sin\theta)^+ p_r^+ +\frac{(\cos\theta)^+}{r} p_\theta^+=-\frac{1}{2}\eta  &  && \mbox{ on   }  \theta=0,\\
p_y^-=(\sin\theta)^- p_r^- +\frac{(\cos\theta)^-}{r} p_\theta^-=\frac{1}{2}\eta  &  && \mbox{ on   } \theta=2\pi,\\
p=0 & && \mbox{ on } \partial B_1.
\end{aligned}\right.
\end{equation}
It is worth noting that $p \not \in H^2(B_1)$. However, by a straightforward calculation, it is clear that $p\in \maK_{a+1}^{m+1}(B_1)$ and $\Delta(\eta r\sin\frac{\theta}{2}) \in \maK_{a-1}^{m-1}(B_1)$ for any $m\geq1$ and $0<a<1$.

{\bf Step 3.} Let $z=p-q$. Then, based on equations (\ref{eq:2d}), (\ref{eq:polaro}), and (\ref{eq:polarv}), $z$ satisfies \begin{equation}
\label{eq:zz}
\left\{\begin{aligned}
    -\Delta z= f & \quad \text{in }  B_1\setminus \gamma, \\
    z_y^+=z_y^- & \quad \text{on }  \gamma , \\
    z^+=z^- & \quad \text{on }  \gamma , \\
    z=0& \quad \text{on } \partial B_1,
\end{aligned}\right.
\end{equation}
where $f=-\Delta(w\eta)-\Delta(\eta r\sin\frac{\theta}{2})$. Note that  by the fact $\Delta(w\eta)=0$ in $B_d\setminus \gamma$ and by  Lemma \ref{waway}, $f\in \maK_{a-1}^{m-1}(B_1)$ for any $m\geq1$ and $0<a<1$.
Applying Lemma \ref{singular} to equation (\ref{eq:zz}), we conclude that there exists a unique decomposition $z=z_{reg}+z_s$, with $z_{reg} \in  \maK_{a+1}^{m+1}(B_d\setminus\gamma)$ and $z_s \in W$, satisfying
\bq\label{eq.zzz}
\|z_{reg}\|_{\maK_{a+1}^{m+1}(B_d\setminus\gamma)}+\|z_s\|_{L^\infty(B_1)} \leq C \|f\|_{\maK_{a-1}^{m-1}(B_1\setminus\gamma)}.
\eq
Since $\eta w=q=p-z$, by the estimate (\ref{eq.zzz}) and by the definition of $p$ in Step 2, we obtain the decomposition of $w$ in $B_d\setminus \gamma$:
$$
w=w_{reg}+w_s,
$$
where $w_{reg}=p-z_{reg}$ and $w_s=-z_s$, such that for any $m\geq1$ and $0<a<1$,
\bqs
\|w_{reg}\|_{\maK_{a+1}^{m+1}(B_d\setminus\gamma)}+\|w_s\|_{L^\infty(B_1)} \leq C \|f\|_{\maK_{a-1}^{m-1}(B_1)}+\|p\|_{\maK_{a+1}^{m+1}(B_d\setminus\gamma)}<C,
\eqs
which completes the proof.

%
%
%

\end{proof}


\subsection{Regularity estimates for equation (\ref{eq:Possion})}\label{33}
 Recall that $\mathcal V$ consists of the endpoints of $\gamma$ and all the vertices of $\Omega$. Recall $B_{d, i}:=B(Q_i, d)$ in Theorem \ref{lemma3-4}, and the regions $\Omega^+$, $\Omega^-$, $R_3$ in Remark \ref{rk31}. We are now ready to derive the regularity estimate for the solution of equation \eqref{eq:Possion} with the line Dirac measure.

 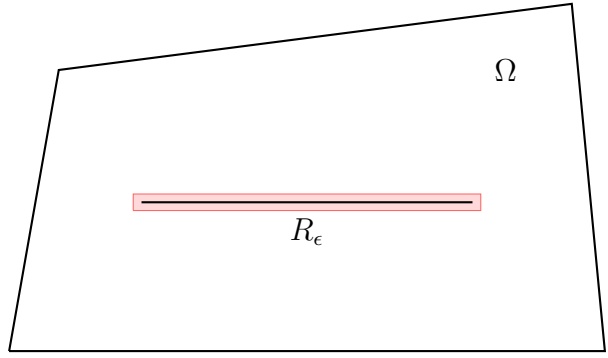
\begin{figure}
\begin{center}
\begin{tikzpicture}[scale=0.22]

\draw[thick]
(-18,-11) -- (-15,6) -- (16,10) -- (18,-11) -- (-18,-11);

\filldraw[color=red!60, fill=red!15]
(-10.5,-2.5) -- (-10.5,-1.5) -- (10.5,-1.5) -- (10.5,-2.5) -- (-10.5,-2.5);

\draw[thick] (-10,-2) -- (10,-2);

\draw (0,-3.8) node {$R_{\epsilon}$};

\draw (12,6) node {$\Omega$};
\end{tikzpicture}
\end{center}
\vspace*{-15pt}
    \caption{A small neighborhood $R_\epsilon$ of the line fracture $\gamma$.}
    \label{fig:R_epsilon}
\end{figure}

\begin{thm}\label{ureg}
The solution $u$ of equation (\ref{eq:Possion}) is smooth in the region away from the set $\mathcal V$, namely, for  $m\geq 1$, $u\in H^{m+1}(\Omega^+)$ and $u\in H^{m+1}(\Omega^-)$. In the neighborhood of each endpoint of $\gamma$,
$u$ admits a decompostion
$$
u=u_{{reg}}+u_s, \qquad u_s\in W,
$$
such that for any $m\geq 1$ and $0<a<1$,
\bqs
\|u_{reg}\|_{\maK_{a+1}^{m+1}(B_{d,i}\setminus\gamma)}+\|u_s\|_{L^\infty(B_i)} \leq C.
\eqs
In the region $R_3$ away from $\gamma$ and close to the boundary, $u\in\maK^{m+1}_{a+1}(R_3)$ for $m\geq 1$ and $0<a<\frac{\pi}{\omega}$, where $\omega$ is the largest interior angle among all the vertices of the domain $\Omega$.
\end{thm}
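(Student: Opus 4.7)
The plan is to identify the solution $u$ of equation (\ref{eq:Possion}) with the solution $w$ of the transmission problem (\ref{eq:2d}); once this is done, the local estimates in Lemma \ref{waway} and Theorem \ref{lemma3-4} transfer directly to $u$, and the $R_3$ claim reduces to classical corner regularity. To verify $u = w$, first note that the matching condition $w^+ = w^-$ on $\gamma$ ensures the one-sided traces of $w$ agree, so $w \in H^1(\Omega)$, and combined with $w = 0$ on $\partial \Omega$ we have $w \in H^1_0(\Omega)$. For any $v \in H^1_0(\Omega)$, I would integrate by parts on $\Omega_+ := \Omega \cap \mathbb{H}^+$ and $\Omega_- := \Omega \cap \mathbb{H}^-$ separately, use $-\Delta w = 0$ away from $\gamma$, and note that the outward unit normals to $\Omega_\pm$ along $\gamma$ point in the $\mp \hat{y}$ directions. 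The boundary terms on $\partial\Omega$ vanish since $v|_{\partial\Omega}=0$, while the contributions on $\gamma$ collapse into
\[
\int_\gamma (w_y^- - w_y^+)\, v \, ds = \int_\gamma v \, ds = \langle \delta_\gamma, v\rangle
\]
by the jump condition in (\ref{eq:2d}). Hence $w$ satisfies the weak formulation (\ref{eqn.weak}), and by uniqueness $u = w$.

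The three assertions of Theorem \ref{ureg} then follow immediately. Smoothness of $u$ in $\Omega^+$ and $\Omega^-$ is exactly Lemma \ref{waway}. The decomposition $u = u_{reg} + u_s$ with $u_s \in W$ and the weighted estimate on each $B_{d,i}$ is exactly Theorem \ref{lemma3-4}. For the claim on $R_3$, I would use that $\gamma \cap \overline{R_3} = \emptyset$, so $u$ is harmonic in $R_3$ with zero Dirichlet data on $\partial\Omega \cap \overline{R_3}$, and the only singularities that can arise there are corner singularities at the vertices of $\Omega$. Standard Kondratiev-type regularity for elliptic corner problems (cf.\ \cite{Kondratiev67, Dauge88, Grisvard85, Li10}) then yields $u \in \maK^{m+1}_{a+1}(R_3)$ for $0 < a < \pi/\omega$, the bound $\pi/\omega$ reflecting the smallest positive eigenvalue $(\pi/\omega)^2$ of the angular Laplacian on $(0,\omega)$ with Dirichlet data at the vertex of largest interior angle $\omega$.

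The only step requiring real care is the identification $u = w$: one must track the orientations of the unit normals on $\gamma$ from $\Omega_\pm$ correctly and justify the integration by parts, which is legitimate because $v|_\gamma \in L^2(\gamma)$ by the $H^1$ trace theorem (the same fact already used to make $\langle \delta_\gamma, v\rangle$ meaningful in Section \ref{sec-2}). After this reduction, the proof is essentially a matter of assembling the three local regularity results already proved in this section together with classical corner regularity.
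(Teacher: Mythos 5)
Your proposal is correct and follows essentially the same route as the paper: identify $u$ with the solution $w$ of the transmission problem (\ref{eq:2d}) via an integration by parts that converts the jump $w_y^- - w_y^+ = 1$ on $\gamma$ into the pairing $\langle \delta_\gamma, v\rangle$, then invoke Lemma \ref{waway}, Theorem \ref{lemma3-4}, and classical corner regularity for the three regions. The only difference is technical execution --- the paper establishes $u=w$ by testing $\Delta(u-w)$ against $C_0^\infty$ functions on a shrinking strip $R_\epsilon$ around $\gamma$ and passing to the limit, whereas you verify the weak form (\ref{eqn.weak}) directly by Green's identity on the upper and lower half-domains (noting, as you should, that the jump of $w_y$ vanishes on the part of the chord $\{y=0\}\cap\Omega$ outside $\gamma$ where $w$ is harmonic across the line).
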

\begin{proof} Recall the solution $w$ of the transmission problem (\ref{eq:2d}). We shall show $u=w$. We first extend $w$ to $\Omega$ by defining
\begin{equation}\label{u_alter}
w:=
\left\{
\begin{aligned}
w\quad  & \text{in } \Omega\backslash \gamma, \\
w^+(=w^-)\quad  & \text{on }  \gamma.
\end{aligned}
\right.
\end{equation}
For  $\epsilon>0$ small, define $R_{\epsilon}:=\{(-\epsilon, 1+\epsilon) \times (-\epsilon, \epsilon)\}$ to be a small neighborhood of $\gamma$.  Let $\mathbf{n}_\epsilon$ be the unit outward normal vector to $\partial R_{\epsilon}$. See Figure \ref{fig:R_epsilon}. Let $\tilde{u}=u-w$. Then for any  $\phi \in C_0^\infty(\Omega)$, it follows
\bq
\ba\label{uwdiff}
-\iint_\Omega \Delta \tilde{u} \phi dxdy = & -\iint_\Omega \Delta u \phi dxdy +\iint_\Omega \Delta w \phi dxdy \\
= & \iint_\Omega \delta_\gamma \phi dxdy + \iint_{\Omega\backslash R_{\epsilon}} \Delta w \phi dxdy+ \iint_{R_{\epsilon}} \Delta w \phi dxdy \\
= & \int_\gamma \phi ds + \iint_{\Omega\backslash R_{\epsilon}} \Delta w \phi dxdy - \iint_{R_{\epsilon}} \nabla w \cdot \nabla \phi dxdy + \int_{\partial R_{\epsilon}} \nabla w\cdot \mathbf{n}_\epsilon  \phi ds.
\ea
\eq
For each term on the right hand side of (\ref{uwdiff}), we have the following estimates. In particular,
$$
\int_{\partial R_{\epsilon}} \nabla w\cdot \mathbf{n}_\epsilon  \phi ds =\int_{-\epsilon}^{1+\epsilon} (w_y(x,\epsilon)-w_y(x,-\epsilon))\phi dx + \int_{-\epsilon}^{\epsilon} (w_x(1+\epsilon,y)-w_x(-\epsilon,y))\phi dy.
$$
By (\ref{eq:2d}) we have
\bq
\ba
\iint_{\Omega\backslash R_{\epsilon}} \Delta w \phi dxdy=0. \nonumber
\ea
\eq
As $\epsilon \rightarrow 0$, due to the boundedness of $|\nabla w|$ in $R_{\epsilon}$, it follows
$$
\iint_{R_{\epsilon}} \nabla w \cdot \nabla \phi dxdy \rightarrow 0;
$$
and by the transmission condition in (\ref{eq:2d}), we futher have
$$
\int_{\partial R_{\epsilon}} \nabla w\cdot \mathbf{n}_\epsilon  \phi ds \rightarrow \int_0^1 (w_y(x,0+)-w_y(x,0-))\phi dx = -\int_\gamma \phi dx.
$$
Incorporating the above estimates into  equation (\ref{uwdiff}), we have
$$
-\iint_\Omega \Delta \tilde{u} \phi dxdy = 0,  \quad \forall\ \phi \in C_0^\infty(\Omega).
$$
We then conclude that
$$
-\Delta \tilde{u} = 0 \quad \text{in } \Omega.
$$
Note that $\tilde{u}=u-w=0$ on $\partial \Omega$, then it follows $\tilde{u}=0$ in $\Omega$, namely, $u=w$ in $\Omega$.

Therefore, the regularity estimates for $u$ in $\Omega^+$, $\Omega^-$, and in $B_{d, i}$, $i=1,2$ can be derived from the corresponding estimates for $w$ in Lemma \ref{waway} and in Theorem \ref{lemma3-4}. The regularity estimates for $u$ in $R_3$ follow from the results in \cite{Kondratiev67, Grisvard85} for elliptic Dirichlet problems in polygonal domains.
\end{proof}

\section{Optimal finite element methods}\label{sec-4}

According to Lemma \ref{thm2-2}, the solution of equation \eqref{eq:Possion} is merely in $H^{\frac{3}{2}-\epsilon}(\Omega)$ for any $\epsilon>0$. The singularities in the solution can severely slow down the  convergence of the usual finite element method associated with a quasi-uniform mesh. In this section, we  propose new finite element algorithms to approximate the solution of equation \eqref{eq:Possion} that shall converge at the optimal rate.

\subsection{The finite element method}

Let $\mathcal{T}=\{T_i\}$ be a triangulation of $\Omega$ with triangles. For $m\geq 1$,  we denote the Lagrange finite element space by
\bq\label{femspace}
S(\mathcal{T},m)=\{v\in C^0(\Omega) \cap H_0^1(\Omega):v|_T\in P_m(T), \ \forall \ T \in \mathcal{T}\},
\eq
where $P_m(T)$ is the space of  polynomials with degree no more than $m$ on  $T$. Following the variational form (\ref{eqn.weak}), we define  the finite element solution  $u_h\in S(\mathcal{T},m)$ of  equation \eqref{eq:Possion} by
\begin{equation}
\label{eq:FEM}
\int_\Omega\nabla u_h\cdot\nabla v_hdx=\int_\gamma v_hdx, \quad \forall\ v_h\in S(\mathcal{T},m).
\end{equation}
Suppose that the mesh $\mathcal T$ consists of quasi-uniform triangles with size $h$. Because of the lack of regularity in the solution ($u\in H^{\frac{3}{2}-\epsilon}(\Omega))$, the standard error estimate \cite{Ciarlet74} yields only a sup-optimal convergence rate
\begin{equation}
\|u-u_h\|_{H^1(\Omega)}\leq C h^{\frac{1}{2}-\epsilon},\qquad {\rm{for}}\ \epsilon>0.
\end{equation}
This is highly ineffective since the optimal convergence rate using the $m$th-degree polynomials when the solution is smooth  is
\begin{equation*}
\|u-u_h\|_{H^1(\Omega)}\leq C h^{m}.
\end{equation*}

We now propose new finite element methods to solve equation \eqref{eq:Possion} based on the special refinement of the triangles. Recall that the singular set $\mathcal V$ includes the endpoints of $\gamma$ and all the vertices of $\Omega$. We call the points in $\mathcal V$ the \textit{singular points}.

\begin{figure}
\includegraphics[scale=0.34]{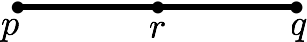}\hspace{3cm}\includegraphics[scale=0.34]{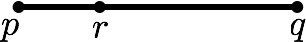}
\caption{The new node  of the an edge $pq$ (left -- right): no singular vertices (midpoint); $p$ is a singular point ($|pr|=\kappa_p|pq|$,  $\kappa_p<0.5$).}\label{fig.2} \end{figure}

\begin{figure}
 \includegraphics[scale=0.34]{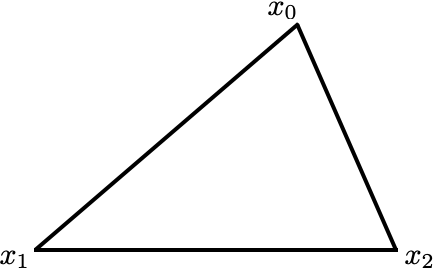}\hspace{0.5cm} \includegraphics[scale=0.34]{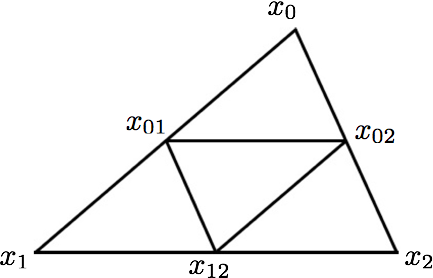}\\\hspace{0cm}\includegraphics[scale=0.34]{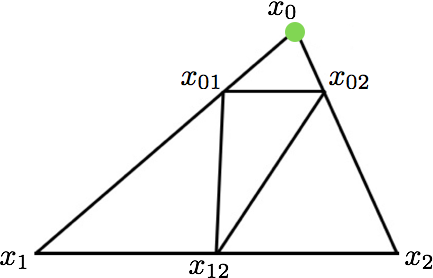}\hspace{.5cm}\includegraphics[scale=0.34]{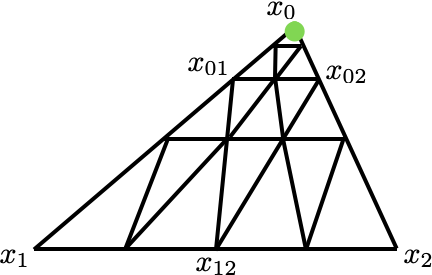}
\caption{Refinement of a triangle $\triangle x_0x_1x_2$. First row: (left -- right): the initial triangle and the midpoint refinement; second row: two consecutive graded refinements toward $x_0$, ($\kappa_{x_0}<0.5$).}\label{fig.333}\end{figure}

\begin{alg} \label{graded} (Graded refinements) Suppose each singular point  is a vertex in the triangulation $\mathcal T$ and each triangle in $\mathcal T$ contains at most one singular point. We also suppose $\mathcal T$ conforms to $\gamma$. Namely,  $\gamma$ is the union of some edges in $\mathcal T$ and  does not cross triangles in $\mathcal T$.
Let ${pq}$ be an edge in the triangulation $\mathcal T$ with $p$ and $q$ as the endpoints. Then, in a graded refinement, a new node $r$ on $pq$ is produced according to the following conditions:
\begin{itemize}
\item[1.] (Neither $p$ or $q$ is a singular point.) We choose $r$  as the midpoint ($|pr|=|qr|$).
\item[2.] ($p$ is a singular point.) We choose $r$  such that $|pr|=\kappa_p|pq|$, where $\kappa_p\in (0, 0.5)$ is a parameter that will be specified later. See Figure \ref{fig.2} for example.
\end{itemize}
Then, the graded refinement, denoted by $\kappa(\mathcal T)$, proceeds as follows.
For each triangle in $\mathcal T$, a new node is generated on each edge as described above. Then, $T$ is decomposed into four small triangles by connecting these new nodes (Figure \ref{fig.333}). Given an initial mesh $\mathcal T_0$ satisfying the condition above, the associated family of graded meshes $\{\mathcal T_n,\ n\geq0\}$ is defined recursively $\mathcal T_{n+1}=\kappa(\mathcal T_{n})$.
\end{alg}

\begin{rem}\label{rkgraded}
In Algorithm \ref{graded}, we choose the parameter $\kappa_p$ for each $p\in \mathcal V$ as follows. Recall $m$ is the degree of polynomials in the finite element space $S(\mathcal T_n, m)$. Then, if $p$ is an endpoint of $\gamma$, we choose $\kappa_p=2^{-\frac{m}{a}}$ for any $0<a<1$, and if $p$ is an vertex of the domain $\Omega$, we choose $\kappa_p<2^{-\frac{m\omega}{\pi}}$, where $\omega$ is the largest interior angle of the domain.
 \end{rem}
Let $S_n:=S(\mathcal{T}_n,m)$ be the finite element space of degree $m$ associated with the graded meshes defined in Algorithm \ref{graded} and Remark \ref{rkgraded}. Then, we define the finite element solution $u_n\in S_n$ as
\begin{equation}
\label{eq:FEM1}
a(u_n, v_n)=\int_\Omega\nabla u_n\cdot\nabla v_ndx=\int_\gamma v_ndx, \quad \forall\ v_n\in S_n.
\end{equation}
Note that the bilinear form $a(\cdot, \cdot)$ is coercive and continuous on $S_n$. Thus, by C\'ea's Theorem, we have
\begin{eqnarray}\label{cea}
\|u-u_n\|_{H^1(\Omega)}\leq C\inf_{v\in S_n}\|u-v\|_{H^1(\Omega)}.
\end{eqnarray}
In the rest of this section, we shall show that the proposed numerical solution $u_n$ converges to the solution $u$ of \eqref{eq:Possion} in the optimal rate.

\subsection{Interpolation error estimates}
Recall the three regions $R_1$, $R_2$ and $R_3$ of the domain $\Omega$ in Remark \ref{rk31}. $R_1$ is the region that is away from the singular set $\mathcal V$. $R_2$ is the region close to the endpoints of $\gamma$ and $R_3$ is the region close to the boundary of the domain. According to the regularity analysis in Section \ref{sec-3}, the solution of equation (\ref{eq:Possion}) behaves differently in these three regions. We therefore focus on the local interpolation error analysis in different regions.



\subsubsection{Interpolation error estimates in $R_1$ and $R_3$.}
\begin{lem}\label{r1r3}
Recall the triangulation $\mathcal T_n$ in Algorithm \ref{graded} and Remark \ref{rkgraded}. Let $T_{(0)}\in\mathcal T_{0}$ be an initial triangle and let $u_I$ be the nodal interpolation of $u$ associated with $\mathcal T_n$. If $\bar T_{(0)}$ does not contain the endpoint of $\gamma$, then
\begin{equation*}
\|u-u_I\|_{H^1(T_{(0)})}\leq Ch^m,
\end{equation*}
where $h:=2^{-n}$.
\end{lem}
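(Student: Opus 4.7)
The plan is to split on whether $T_{(0)}$ contains a vertex of $\partial\Omega$ (region $R_3$) or not (region $R_1$). Since $\bar T_{(0)}$ avoids the endpoints of $\gamma$ and since $\mathcal T_0$ conforms to $\gamma$, in either sub-case $T_{(0)}$ sits entirely on one side of $\gamma$, so the appropriate regularity from Theorem \ref{ureg} applies on all of $T_{(0)}$.

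\textbf{Case 1: $T_{(0)}$ contains no point of $\mathcal V$.} Then every edge encountered during refinement of $T_{(0)}$ meets rule 1 of Algorithm \ref{graded}, so $\mathcal T_n$ restricted to $T_{(0)}$ is the uniform $n$-fold midpoint refinement, with all diameters of order $2^{-n}=h$ and a uniform shape-regularity constant. By Theorem \ref{ureg}, $u|_{T_{(0)}}\in H^{m+1}(T_{(0)})$ with a bound independent of $n$. A routine Bramble–Hilbert / standard Lagrange interpolation estimate on each triangle followed by summation yields $\|u-u_I\|_{H^1(T_{(0)})}\le C h^m \|u\|_{H^{m+1}(T_{(0)})}\le C h^m$.

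\textbf{Case 2: $T_{(0)}$ contains a vertex $Q$ of $\partial\Omega$.} Here Theorem \ref{ureg} gives $u\in\maK^{m+1}_{a+1}(T_{(0)})$ for any $0<a<\pi/\omega$. By Remark \ref{rkgraded} the parameter $\kappa_Q$ satisfies $\kappa_Q<2^{-m\omega/\pi}$, so we may fix such an $a$ with $\kappa_Q\le 2^{-m/a}$. The triangles of $\mathcal T_n\cap T_{(0)}$ can be classified by their generation $\ell\in\{0,\dots,n\}$ of graded refinement toward $Q$: triangles at level $\ell$ that do not touch $Q$ have diameter $h_T\sim \kappa_Q^\ell$ and distance $\rho_T\sim\kappa_Q^\ell$ from $Q$, while a fixed-size collection of triangles at level $n$ actually touch $Q$ with diameter $\sim\kappa_Q^n$. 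On each triangle not touching $Q$, the weight $\rho$ is comparable to $\rho_T$, so $\|u\|_{H^{m+1}(T)}\le C\rho_T^{\,a-m}\|u\|_{\maK^{m+1}_{a+1}(T)}$, and the standard scaling estimate gives
\[
\|u-u_I\|_{H^1(T)}\le C h_T^{\,m}\|u\|_{H^{m+1}(T)}\le C(h_T/\rho_T)^{m}\rho_T^{\,a}\|u\|_{\maK^{m+1}_{a+1}(T)}.
\]
Shape regularity forces $h_T/\rho_T$ uniformly bounded, and $\rho_T^{\,a}\le C\kappa_Q^{\,\ell a}\le C\,2^{-\ell m}$; squaring and summing over triangles at all levels using the additivity of the $\maK^{m+1}_{a+1}$ norm yields a geometric series bounded by $C\,2^{-n m}\|u\|_{\maK^{m+1}_{a+1}(T_{(0)})}=Ch^m$. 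On the few triangles that touch $Q$ at level $n$, one rescales to a reference element and uses a weighted interpolation estimate in $\maK^{m+1}_{a+1}$ (valid because $u\in C^0$ at $Q$ by Corollary \ref{co1}, so the nodal value $u(Q)$ is well defined); the prefactor is again $\kappa_Q^{\,na}\le 2^{-nm}=h^m$.

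\textbf{Main obstacle.} The routine part is Case 1 and the interior triangles in Case 2. The delicate point is the handful of elements touching $Q$ in Case 2: the nodal interpolant is not controlled by the $H^{m+1}$ seminorm there because $u$ need not be in $H^{m+1}$ up to $Q$. One must instead invoke a weighted Lagrange interpolation estimate directly in $\maK^{m+1}_{a+1}$ and verify that the constant produced by the scaling from the reference triangle combines correctly with $\kappa_Q^{\,na}$ to give exactly $h^m$, which is what forces the precise threshold $\kappa_Q\le 2^{-m/a}$ chosen in Remark \ref{rkgraded}.
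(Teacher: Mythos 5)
Your overall structure matches the paper's: split on whether $\bar T_{(0)}$ is disjoint from $\mathcal V$ or contains a vertex of $\partial\Omega$, handle the first case by the standard interpolation estimate with $u\in H^{m+1}$ from Theorem \ref{ureg}, and handle the corner case using $u\in\maK^{m+1}_{a+1}$ with the graded mesh. For the corner case the paper simply cites \cite{BNZ1, Li10} (the mesh near a domain vertex is exactly the one analyzed there), whereas you attempt to reproduce that dyadic--layer scaling argument; your template for doing so is essentially the paper's own Lemmas \ref{TNtri} and \ref{TNtri2}, which carry out the same computation at the endpoints of $\gamma$.

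However, your explicit computation in Case 2 contains a bookkeeping error that, as written, makes the summation fail. After $n$ refinements, a triangle in the $\ell$th layer around $Q$ has diameter $h_T\sim\kappa_Q^{\ell}\,2^{\ell-n}$ (the layer has overall size $\kappa_Q^{\ell}$ but is subsequently refined $n-\ell$ more times quasi-uniformly; see (\ref{eqn.size})), not $h_T\sim\kappa_Q^{\ell}$. With your values $h_T\sim\rho_T\sim\kappa_Q^{\ell}$, the per-layer bound is $C\,(h_T/\rho_T)^m\rho_T^{a}\le C\,2^{-\ell m}$, and summing the squares over $\ell=0,\dots,n$ gives a geometric series bounded by a \emph{constant}, not by $2^{-2nm}$; your claim that it is "bounded by $C\,2^{-nm}\|u\|$" does not follow from the displayed inequalities. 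The correct accounting is
\[
h_T^{m}\,\rho_T^{\,a-m}\;\sim\;\kappa_Q^{\ell m}2^{(\ell-n)m}\,\kappa_Q^{\ell(a-m)}\;=\;2^{(\ell-n)m}\kappa_Q^{\ell a}\;\le\;2^{(\ell-n)m}2^{-\ell m}\;=\;2^{-nm}=h^{m},
\]
uniformly in $\ell$, after which the sum over layers is controlled by the additivity of $\|u\|^2_{\maK^{m+1}_{a+1}}$ rather than by a geometric series. Your treatment of the final layer touching $Q$ (weighted interpolation on the rescaled element, using $u(Q)=0$) is sound and mirrors Lemma \ref{TNtri2}. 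So the approach is right and the conclusion is recoverable, but the scaling step as stated would not deliver the rate $h^m$.
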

\begin{proof}
Note that if $\bar T_0$ does not contain the endpoint of $\gamma$, then $\bar T_{(0)}\cap\mathcal V= \emptyset$ or $\bar T_{(0)}$ contains a vertex of the domain $\Omega$. If $\bar T_{(0)}\cap\mathcal V=\emptyset$, we have $u\in H^{m+1}(T_{(0)})$ (Theorem \ref{ureg}) and the mesh on $T_{(0)}$ is quasi-uniform (Algorithm \ref{graded}) with size $O(2^{-n})$. Therefore, based on the standard interpolation error estimate, we have
\begin{eqnarray}\label{1.1}
\|u-u_I\|_{H^1(T_{(0)})}\leq Ch^m\|u\|_{H^{m+1}(T_{(0)})}\leq Ch^m.
\end{eqnarray}
In the case that $\bar T_0$ contains a vertex of the domain, the solution may be singular in the neighborhood of a corner. Based on the results in \cite{BNZ1}, the solution $u\in \mathcal K^{m+1}_{a+1}(T_{(0)})$ for $a<\frac{\pi}{\omega}$ and $m\geq 1$, where $\omega$ is the largest interior angle of the domain. Note that the graded mesh on $T_{(0)}$ with the  parameter in Remark \ref{rkgraded} is the same mesh defined in \cite{BNZ1, Li10}, which can recover the optimal convergence rate in the finite element method even when the solution has corner singularities:
\begin{eqnarray}\label{1.2}
\|u-u_I\|_{H^1(T_{(0)})}\leq  Ch^m.
\end{eqnarray}
The proof is hence completed by (\ref{1.1}) and (\ref{1.2}).
\end{proof}

\subsubsection{Interpolation error estimates in $R_2$.} We now study the interpolation error in the neighborhood of the endpoint $Q$ of $\gamma$. In the rest of this subsection, we assume $T_{(0)}\in\mathcal T_0$ is an initial triangle such that $Q$ is a vertex of $T$. According to Remark \ref{rkgraded}, the mesh on $T_{(0)}$ is graded toward $Q$ with $\kappa_Q=2^{-\frac{m}{a}}$ for any $0<a<1$. We first define mesh layers on $T_0$ which are collections of triangles in $\mathcal T_n$.
\begin{definition} (Mesh layers) Let $T_{(i)}\subset T_{(0)}$ be the triangle in $\mathcal T_i$, $0\leq i\leq n$, that is attached to the singular vertex $Q$ of $T_{(0)}$. For $0\leq i<n$, we define the $i$th mesh layer of $\mathcal T_n$ on $T_{(0)}$ to be the region $L_{i}:=T_{(i)}\setminus T_{(i+1)}$; and for $i=n$, the $n$th layer is $L_{n}:=T_{(n)}$.  See Figure \ref{fig.layer} for example.
\end{definition}
\begin{figure}
 \includegraphics[scale=0.34]{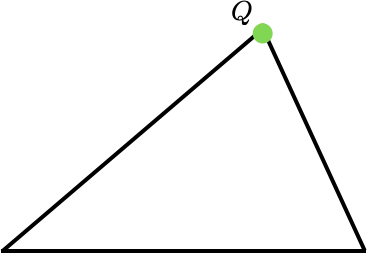}\hspace{0.5cm} \includegraphics[scale=0.34]{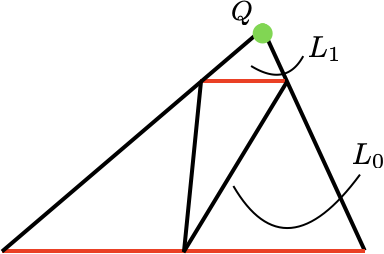}\hspace{0.5cm}\includegraphics[scale=0.34]{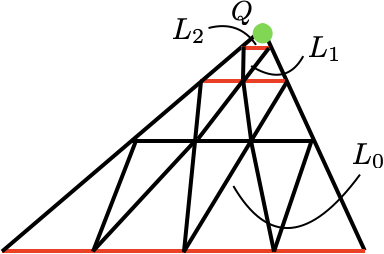}
\caption{Mesh layers (left -- right): the initial triangle $T_{(0)}$ with a vertex $Q$; two layers after one refinement; three layers after two refinements.}\label{fig.layer}\end{figure}

\begin{rem}
The triangles in $\mathcal T_n$ constitute $n$ mesh layers on $T_{(0)}$. According to Algorithm \ref{graded} and the choice of grading parameters in Remark \ref{rkgraded}, the mesh size in the $i$th  layer $L_i$ is
\begin{equation}\label{eqn.size}O(\kappa_Q^i2^{i-n}). \end{equation}
Meanwhile, the weight function $\rho$ in (\ref{eqn.rho}) satisfies
\begin{eqnarray}\label{eqn.dist}
\rho=O(\kappa_Q^i) \ \ \ {\rm{in\ }} L_i\ (0\leq i< n) \qquad {\rm{and}}  \qquad \rho \leq C\kappa_Q^n \ \ \ {\rm{in\ }} L_n.
\end{eqnarray}
Although the mesh size varies in different layers, the triangles in $\mathcal T_n$ are shape regular. In addition, using the local Cartesian coordinates such that $Q$ is the origin,  the mapping
\begin{eqnarray}\label{eqn.map}
\mathbf B_{i}= \begin{pmatrix}
  \kappa_Q^{-i}   &   0 \\
  0    &   \kappa_Q^{-i} \\
\end{pmatrix},\qquad 0\leq i\leq n
\end{eqnarray}
is a bijection between $L_i$ and $L_0$ for $0\leq i<n$ and a bijection between $L_n$ and $T_{(0)}$.
\end{rem}

We then derive the interpolation error estimate in each layer.

\begin{lem}\label{TNtri}
Recall $\kappa_Q = 2^{-\frac{m}{a}}$ for the graded mesh on $T_{(0)}$, $m\geq 1$ and $0<a<1$. Let $u_I$ be the nodal interpolation of $u$ in the $i$th layer $L_i$ on $T_{(0)}$, $0\leq i<n$. Then, for $h:=2^{-n}$, we have
$$
|u-u_{I}|_{H^1(L_i)} \leq Ch^m.
$$
\end{lem}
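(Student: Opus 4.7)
The plan is to isolate the contributions of the regular and singular parts of $u$ using the decomposition $u = u_{reg} + u_s$ given by Theorem \ref{ureg}, and then to rescale $L_i$ onto a reference layer $L_0$ where a classical polynomial interpolation estimate is available. First, since $u_s \in W$ is smooth with uniformly bounded derivatives, the standard nodal interpolation estimate yields $|u_s - (u_s)_I|_{H^1(L_i)} \leq C h_i^m \|u_s\|_{H^{m+1}(L_i)} \leq C h^m$, where $h_i = O(\kappa_Q^i 2^{i-n})$ is the mesh size in $L_i$ from (\ref{eqn.size}) and $h_i \leq h$ follows from $\kappa_Q < 1/2$. Thus it suffices to bound $|u_{reg} - (u_{reg})_I|_{H^1(L_i)}$, and I shall denote $u_{reg}$ simply by $u$ in the remainder.

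Next I would carry out a dilation argument using the affine bijection $\mathbf{B}_i : L_i \to L_0$ from (\ref{eqn.map}). For $v$ defined on $L_i$, let $\hat v := v \circ \mathbf{B}_i^{-1}$ on $L_0$. A direct change of variables produces $|v|_{H^1(L_i)} = |\hat v|_{H^1(L_0)}$ and $|\hat v|_{H^{m+1}(L_0)} = \kappa_Q^{im} |v|_{H^{m+1}(L_i)}$, while under $\mathbf{B}_i$ the triangulation of $L_i$ is pulled back to a shape-regular triangulation of $L_0$ of uniform size $\hat h = O(2^{i-n})$. Since the mesh conforms to $\gamma$ by Algorithm \ref{graded}, each triangle in $L_i$ lies on one side of $\gamma$, so $u = u_{reg}$ is genuinely $H^{m+1}$ on each triangle and the classical nodal interpolation estimate applies on the reference configuration, producing
\begin{equation*}
|u - u_I|_{H^1(L_i)} \;=\; |\hat u - \hat u_I|_{H^1(L_0)} \;\leq\; C \hat h^m |\hat u|_{H^{m+1}(L_0)} \;=\; C\, 2^{(i-n)m} \kappa_Q^{im} |u|_{H^{m+1}(L_i)}.
\end{equation*}

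Finally I would trade the unweighted $H^{m+1}$ seminorm on $L_i$ for the weighted one. By (\ref{eqn.dist}), $\rho \approx \kappa_Q^i$ on $L_i$ for $0 \leq i < n$, so pulling out the weight for $|\alpha| = m+1$ gives $|u|_{H^{m+1}(L_i)} \leq C \kappa_Q^{-i(m-a)} \|u\|_{\maK^{m+1}_{a+1}(L_i)}$. Inserting this into the previous display and using the choice $\kappa_Q = 2^{-m/a}$ from Remark \ref{rkgraded}, the $\kappa_Q$ factors collapse via $\kappa_Q^{im} \cdot \kappa_Q^{-i(m-a)} = \kappa_Q^{ia} = 2^{-im}$, which exactly cancels the offending $2^{im}$ from $(2^{i-n})^m$, leaving
\begin{equation*}
|u - u_I|_{H^1(L_i)} \;\leq\; C\, 2^{-nm} \|u\|_{\maK^{m+1}_{a+1}(L_i)} \;\leq\; C h^m,
\end{equation*}
where the final inequality uses $\|u_{reg}\|_{\maK^{m+1}_{a+1}(B_{d,i}\setminus \gamma)} \leq C$ from Theorem \ref{ureg}. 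I expect the main obstacle to be the careful bookkeeping of the exponents, and in particular confirming that the grading parameter $\kappa_Q = 2^{-m/a}$ produces exactly the cancellation needed so that the layerwise bound is uniform in $i$ and does not accumulate any polynomial factor in $n$ when the layers are eventually summed.
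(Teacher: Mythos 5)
Your proposal is correct and follows essentially the same route as the paper: the decomposition $u = u_{reg}+u_s$ from Theorem \ref{ureg}, the dilation $\mathbf{B}_i:L_i\to L_0$ with the standard interpolation estimate on the rescaled layer, and the trade of the unweighted $H^{m+1}$ seminorm for the $\maK^{m+1}_{a+1}$ norm via $\rho\approx\kappa_Q^i$, with the identical exponent cancellation $\kappa_Q^{ia}=2^{-im}$. The only (harmless) cosmetic differences are that you bound the $u_s$ term directly by the local mesh size rather than by repeating the scaling argument, and you are slightly more explicit than the paper about the mesh conforming to $\gamma$ so that the elementwise $H^{m+1}$ interpolation estimate applies.
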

\begin{proof} Based on Theorem \ref{ureg}, the solution can be decomposed into two parts on $T_{(0)}$, $u=u_{reg}+u_s$, where for $m\geq 1$ and $0<a<1$, \begin{equation*}
\|u_{reg}\|_{\maK_{a+1}^{m+1}(T_{(0)})}+\|u_s\|_{L^\infty(T_{(0)})} \leq C.
\end{equation*}
Since $u_s$ belongs to a finite dimensional space, the norms of $u_s$ are equivalent. Thus, we have
\begin{equation}\label{eqn.decomp}
\|u_{reg}\|_{\maK_{a+1}^{m+1}(T_{(0)})}+\|u_s\|_{H^{m+1}(T_{(0)})} \leq C.
\end{equation}

Note that in each $L_i$, $i<n$, the space $\maK_{a+1}^{m+1}$ is equivalent to $H^{m+1}$. Therefore, both $u_{reg}$ and $u_s$ are continuous functions in $L_i$. Let $u_{reg, I}$ and $u_{s,I}$ be the nodal interpolations of $u_{reg}$ and $u_{s}$, respectively. Then, it is clear that $u_I=u_{reg, I}+u_{s,I}$. Thus, we have
\begin{eqnarray}\label{eqn.deom1}
|u-u_{I}|_{H^1(L_i)}\leq |u_{reg}-u_{reg, I}|_{H^1(L_i)}+|u_{s}-u_{s, I}|_{H^1(L_i)}.
\end{eqnarray}
We shall obtain the estimate for each term on the right hand side of (\ref{eqn.deom1}).

Recall the mapping $\mathbf B_i$ in (\ref{eqn.map}). For any point $(x, y)\in L_i$, let $(\hat x, \hat y)=\mathbf B_i(x,y)\in L_0$. Then, for a function $v(x, y)$  in $L_i$, define $\hat v(\hat x, \hat y):=v(x, y)$ in $L_0$. Using the standard interpolation error estimate, the scaling argument,  the estimate in (\ref{eqn.size}) and the mapping in (\ref{eqn.map}), we have
\begin{eqnarray*}
 |u_{reg}-u_{reg, I}|_{H^1(L_i)}&=& |\hat u_{reg}-\hat u_{reg, \hat I}|_{H^1(L_0)}\leq C2^{(i-n)m}|\hat u_{reg}|_{H^{m+1}(L_0)}\\
 &\leq& C 2^{(i-n)m}\kappa_Q^{mi}|u_{reg}|_{H^{m+1}(L_i)}=C h^m(2\kappa_Q)^{mi}|u_{reg}|_{H^{m+1}(L_i)}.
\end{eqnarray*}
Recall $\kappa_Q<2^{-\frac{m}{a}}$ for any $0<a<1$ and recall the estimate in (\ref{eqn.dist}). Then, continuing the estimate above, we obtain
\begin{eqnarray}
 |u_{reg}-u_{reg, I}|^2_{H^1(L_i)}&\leq&C h^{2m}\sum_{|\alpha|=m+1}|\rho^{-1-a}\rho^{m+1}\partial^\alpha u_{reg}|^2_{L^2(L_i)}\nonumber\\&\leq& Ch^{2m} \|u_{reg}\|_{\maK_{a+1}^{m+1}(L_i)}^2,\label{eqn.d1}
\end{eqnarray}
where the last step is based on definition of the weighted space.

For $|u_{s}-u_{s, I}|_{H^1(L_i)}$, by the fact that  $\kappa_Q<0.5$, we similarly have
\begin{eqnarray}
 |u_{s}-u_{s, I}|_{H^1(L_i)}&=& |\hat u_{s}-\hat u_{s, \hat I}|_{H^1(L_0)}\leq C2^{(i-n)m}|\hat u_{s}|_{H^{m+1}(L_0)}\nonumber\\
 &\leq& C 2^{(i-n)m}\kappa_Q^{mi}|u_{s}|_{H^{m+1}(L_i)}=C h^m|u_{s}|_{H^{m+1}(L_i)}.\label{eqn.d2}
\end{eqnarray}
Then, the proof is completed by combining (\ref{eqn.deom1}), (\ref{eqn.d1}), (\ref{eqn.d2}), and (\ref{eqn.decomp}).
\end{proof}

We now derive the interpolation error estimate in the last layer $L_n$ on $T_{(0)}$.

\begin{lem}\label{TNtri2}
Recall $\kappa_Q = 2^{-\frac{m}{a}}$ for the graded mesh on $T_{(0)}$, $m\geq 1$ and $0<a<1$. Let $u_I$ be the nodal interpolation of $u$ in the $n$th layer $L_n$ on $T_{(0)}$ for $n$ sufficiently large. Then, for $h:=2^{-n}$, we have
$$
|u-u_{I}|_{H^1(L_n)} \leq Ch^m.
$$
\end{lem}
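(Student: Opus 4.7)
The plan is to reduce the estimate on the shrinking layer $L_n$ to a fixed reference-element interpolation estimate via the bijection $\mathbf B_n$ from \eqref{eqn.map}. By Theorem~\ref{ureg} the solution decomposes on $T_{(0)}$ as $u=u_{reg}+u_s$ with $u_{reg}\in\maK_{a+1}^{m+1}$ and $u_s\in W$. For $n$ sufficiently large we have $L_n=T_{(n)}\subset B(Q,d)$, on which the cutoff defining $u_s$ is identically one, so $u_s$ reduces to a constant there. Lagrange interpolation preserves constants, so $u_{s,I}=u_s$ on $L_n$, and the estimate collapses to controlling $|u_{reg}-u_{reg,I}|_{H^1(L_n)}$.

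Next I would pass to the reference triangle by setting $\hat v(\hat x):=v(\mathbf B_n^{-1}\hat x)$. Isotropic scaling in two dimensions preserves the $H^1$-seminorm, so
\[
|u_{reg}-u_{reg,I}|_{H^1(L_n)}=|\hat u_{reg}-\hat u_{reg,\hat I}|_{H^1(T_{(0)})}.
\]
A direct change of variables using $\rho=\kappa_Q^n\hat\rho$, $\partial^\alpha_x=\kappa_Q^{-n|\alpha|}\partial^\alpha_{\hat x}$ and $dx=\kappa_Q^{2n}d\hat x$ shows that every term in the Kondratiev norm picks up the \emph{same} factor $\kappa_Q^{-2an}$ independently of $|\alpha|$, yielding
\[
\|\hat u_{reg}\|_{\maK_{a+1}^{m+1}(T_{(0)})}=\kappa_Q^{an}\|u_{reg}\|_{\maK_{a+1}^{m+1}(L_n)}.
\]
The Remark~\ref{rkgraded} choice $\kappa_Q=2^{-m/a}$ converts this factor to $h^m$ exactly, which is precisely why the grading parameter is calibrated in that way.

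The remaining and hardest step is the reference-domain interpolation estimate
\[
|\hat u_{reg}-\hat u_{reg,\hat I}|_{H^1(T_{(0)})}\le C\,\|\hat u_{reg}\|_{\maK_{a+1}^{m+1}(T_{(0)})}
\]
on the single-element mesh $\{T_{(0)}\}$, since the singular vertex $Q$ is itself an interpolation node at which classical smoothness fails. I would handle this by first observing that in two dimensions with $a+1>1$ the condition $\rho^{-(a+1)}\hat u_{reg}\in L^2$ built into $\maK_{a+1}^{m+1}$ forces $\hat u_{reg}(Q)=0$ and continuity at $Q$, so the $P_m$ nodal interpolant is well defined. Away from $Q$ the weighted space coincides with $H^{m+1}$ and a standard Bramble--Hilbert argument applies; near $Q$, a Hardy-type inequality dominates the low-order pointwise values of $\hat u_{reg}$ by its $\maK$-norm, which in turn controls the interpolant through the inverse estimate on the reference element. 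Combining the reference estimate with the scaling factor $\kappa_Q^{an}=h^m$ and the uniform bound on $\|u_{reg}\|_{\maK_{a+1}^{m+1}}$ from Theorem~\ref{ureg} then yields $|u-u_I|_{H^1(L_n)}\le Ch^m$.
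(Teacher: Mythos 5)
Your proposal is correct and follows the same skeleton as the paper's proof: decompose $u=u_{reg}+u_s$ via Theorem~\ref{ureg}, observe that $u_s$ is constant on $L_n$ for large $n$ so that $u_s-u_{s,I}$ vanishes there, dilate $L_n$ onto $T_{(0)}$ by $\mathbf B_n$, and convert the homogeneity factor $\kappa_Q^{an}$ into $h^m$ through the calibration $\kappa_Q=2^{-m/a}$. Your scaling computations are right: the $H^1$-seminorm is dilation-invariant in 2D, and every term of the $\maK^{m+1}_{a+1}$ norm rescales by the same factor $\kappa_Q^{-2an}$ since $2(|\alpha|-a-1)-2|\alpha|+2=-2a$ is independent of $|\alpha|$ (up to constants, since $\rho$ is only comparable to, not equal to, the distance to $Q$ near $Q$). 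Where you diverge from the paper is the reference-element estimate $|\hat u_{reg}-\hat u_{reg,\hat I}|_{H^1(T_{(0)})}\le C\|\hat u_{reg}\|_{\maK^{m+1}_{a+1}(T_{(0)})}$: the paper avoids any weighted interpolation theory at $Q$ by introducing a cutoff $\psi$ vanishing near $Q$ and equal to $1$ at the other nodes, setting $w=\psi\hat u_{reg}$, noting $w_{\hat I}=\hat u_{reg,\hat I}$ because $u_{reg}(Q)=0$, and then splitting $\hat u_{reg}-\hat u_{reg,\hat I}=(\hat u_{reg}-w)+(w-w_{\hat I})$ inside the dilation-invariant norm $\maK^1_1$, so that only standard (unweighted) interpolation for $w$ is needed. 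Your route instead proves the weighted estimate directly via the embedding $\maK^{m+1}_{a+1}\hookrightarrow C^0$ with forced vanishing at $Q$ plus stability of the nodal interpolant; this is valid, and in fact easier than your sketch suggests, since on the last layer no approximation power is needed at all --- the crude triangle inequality $|\hat u_{reg}-\hat u_{reg,\hat I}|_{H^1}\le|\hat u_{reg}|_{H^1}+|\hat u_{reg,\hat I}|_{H^1}$, with the interpolant controlled by the nodal values and hence by $\|\hat u_{reg}\|_{L^\infty}\le C\|\hat u_{reg}\|_{\maK^{m+1}_{a+1}}$, already suffices because the full rate $h^m$ is delivered entirely by the scaling factor. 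The paper's cutoff device buys a proof that never invokes weighted Sobolev embeddings; yours is shorter once the embedding is granted.
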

\begin{proof} Recall from Theorem \ref{ureg} that on $T_{(0)}$, $u=u_{reg}+u_s\in \mathcal K_{a+1}^{m+1}+W$ (see also (\ref{eqn.decomp})). Let $u_{reg, I}$ and $u_{s,I}$ be the nodal interpolations of $u_{reg}$ and $u_{s}$, respectively. Recall $u_s$ is a constant in the $n$th layer $L_n$ when $n$ is sufficiently large, and therefore $(u_s-u_{s, I})|_{L_n}=0$. Thus, it is sufficient to estimate $|u_{reg}-u_{reg, I}|_{H^1(L_n)}$.

Recall the mapping $\mathbf B_n$ in (\ref{eqn.map}). For any point $(x, y)\in L_n$, let $(\hat x, \hat y)=\mathbf B_n(x,y)\in T_{(0)}$. Then, for a function $v(x, y)$ in $L_n$, define $\hat v(\hat x, \hat y):=v(x, y)$ in $T_{(0)}$.
Let $\psi: T_{(0)} \rightarrow [0, 1]$ be a  smooth function that
is equal to $0$ in a neighborhood of $Q$, but is equal
to 1 at all the other nodal points in $\mathcal T_0$.  Then, we let  $w=\psi \hat u_{reg}$ in $T_{(0)}$. Consequently, we have for $l\geq 0$
\begin{equation}\label{eqn.aux111}
 	\|w\|^2_{\maK^{l}_{1}(T_{(0)})}=\|\psi
	\hat u_{reg}\|^2_{\maK^{l}_{1}(T_{(0)})} \leq C
	\|\hat u_{reg}\|^2_{\maK^{l}_{1}(T_{(0)})},
\end{equation}
where $C$ depends on $k$ and the smooth function $\psi$. Moreover, the condition $u_{reg}\in \maK_{a+1}^{m+1}(T_{(0)})$ implies $u_{reg}(Q)=0$. Let $w_{\hat I}$ be the nodal interpolation of $w$ associated with the mesh $\mathcal T_0$ on $T_{(0)}$.
Therefore, by the definition of $w$, we have
\begin{eqnarray}\label{wi} w_{\hat I}=\hat u_{reg, \hat I} = \widehat{u_{reg, I}} \quad {\rm{in}}\ T_{(0)}.\end{eqnarray}

Note that the $\maK^{l}_{1}$ norm and the $H^m$ norm are equivalent for $w$ on $T_{(0)}$, since $w=0$ in the neighborhood of the vertex $Q$. Let $r$ be the distance to $Q$. Then, by the definition of the weighted space, the scaling argument, (\ref{eqn.aux111}),  (\ref{wi}),  and (\ref{eqn.dist}), we have
\begin{eqnarray*}
|u_{reg}-u_{reg, I}|_{H^1(L_{n})}^2 &\leq& C\|u_{reg}-u_{reg, I}\|_{\maK^1_{{1}}(L_{n})}^2 \leq C\sum_{|\alpha|\leq 1}\|r(x,y)^{|\alpha|-1}\partial^\alpha (u_{reg}- u_{reg,I})\|_{L^2(L_{n})}^2\\
&=&C\sum_{|\alpha|\leq 1}\|r(\hat{x},\hat{y})^{|\alpha|-1}\partial^\alpha (\hat u_{reg}- \widehat{u_{reg,I}})\|_{L^2(T_{(0)})}^2\leq C\|\hat u_{reg}- w+w-\widehat{u_{reg,I}}\|_{\maK^1_{1}( T_{(0)})}^2 \\
& \leq &C\big( \|\hat u_{reg}-w\|^2_{\maK^1_{1}(T_{(0)})} +
\|w-\widehat{u_{reg,I}}\|^2_{\maK^1_{1}( T_{(0)}  )}\big)  \\
& = & C\big( \|\hat u_{reg}-w\|^2_{\maK^1_{1}(T_{(0)})} +
\|w-w_{\hat I}\|^2_{\maK^1_{1}( T_{(0)} )}\big)  \\
& \leq & C\big(   \|\hat u_{reg}\|^2_{\maK^1_{1}(T_{(0)} )} +
\|w\|^2_{\maK^{m+1}_{1}( T_{(0)}  )}\big) \leq C\big(   \|\hat u_{reg}\|^2_{\maK^1_{1}(T_{(0)})} +
\|\hat u_{reg}\|^2_{\maK^{m+1}_{1}( T_{(0)} )}\big)\\%
& \leq & C\big(   \|u_{reg}\|^2_{\maK^1_{1}(L_n)} +
\|u_{reg}\|^2_{\maK^{m+1}_{1}( L_n)}\big)\leq C
\kappa_Q^{2na}\|u_{reg}\|_{\maK^{m+1}_{{a}+1}(L_{n})}^2\\
& \leq & C
2^{-2nm}\|u_{reg}\|_{\maK^{m+1}_{{a}+1}(L_{n})}^2\leq C
h^{2m}.
\end{eqnarray*}
This completes the proof.
\end{proof}

Therefore, for  the  finite element method solving equation (\ref{eq:Possion}) defined in Algorithm \ref{graded} and Remark \ref{rkgraded}, we obtain the optimal convergence rate.
\begin{thm}\label{thm.optimal} Let $S_n$ be the finite element space associated with the graded triangulation $\mathcal T_n$ defined in Algorithm \ref{graded} and Remark \ref{rkgraded}. Let $u_n\in S_n$ be the finite element solution of equation (\ref{eq:Possion}) defined in (\ref{eq:FEM1}). Then,
$$
\|u-u_n\|_{H^1(\Omega)}\leq C{\rm{dim}}(S_n)^{-\frac{m}{2}},
$$
where dim$(S_n)$ is the dimension of $S_n$.
\end{thm}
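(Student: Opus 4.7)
The plan is to combine C\'ea's inequality (\ref{cea}) with a global interpolation error estimate built by piecing together the local estimates already established in Lemmas \ref{r1r3}, \ref{TNtri}, and \ref{TNtri2}, and then to translate the mesh-size bound into a bound in terms of $\dim(S_n)$. Specifically, letting $u_I\in S_n$ be the nodal interpolant of $u$, C\'ea's lemma gives $\|u-u_n\|_{H^1(\Omega)}\leq C\|u-u_I\|_{H^1(\Omega)}$, so it suffices to show $\|u-u_I\|_{H^1(\Omega)}\leq Ch^m$ with $h=2^{-n}$.

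To bound $\|u-u_I\|_{H^1(\Omega)}$ I would decompose the integral over $\Omega$ into a sum over the initial triangles $T_{(0)}\in\mathcal T_0$. For any $T_{(0)}$ that does not contain an endpoint of $\gamma$, Lemma \ref{r1r3} directly gives $|u-u_I|_{H^1(T_{(0)})}\leq Ch^m$; since $\mathcal T_0$ is fixed and finite, summing the squares of these bounds contributes $O(h^{2m})$. For each initial triangle $T_{(0)}$ containing an endpoint $Q$ of $\gamma$, I would decompose $T_{(0)}$ into the mesh layers $L_0,\dots,L_n$ and apply Lemma \ref{TNtri} on $L_i$ for $0\leq i<n$ and Lemma \ref{TNtri2} on $L_n$. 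The crucial point, visible inside the proof of Lemma \ref{TNtri}, is that the per-layer bound is really $|u-u_I|_{H^1(L_i)}^2\leq Ch^{2m}\|u_{reg}\|_{\maK^{m+1}_{a+1}(L_i)}^2+Ch^{2m}|u_s|_{H^{m+1}(L_i)}^2$, whose layers assemble telescopically so that
\begin{equation*}
\sum_{i=0}^{n-1}|u-u_I|_{H^1(L_i)}^2\leq Ch^{2m}\bigl(\|u_{reg}\|^2_{\maK^{m+1}_{a+1}(T_{(0)})}+\|u_s\|^2_{H^{m+1}(T_{(0)})}\bigr)\leq Ch^{2m},
\end{equation*}
by the bound (\ref{eqn.decomp}) derived from Theorem \ref{ureg}. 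Adding the last-layer contribution from Lemma \ref{TNtri2} (which already reads $Ch^{2m}$) and combining with the contributions from triangles away from $\gamma$ yields $\|u-u_I\|_{H^1(\Omega)}\leq Ch^m$.

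To finish I would estimate $\dim(S_n)$. Since $\mathcal T_{n+1}=\kappa(\mathcal T_n)$ is obtained from $\mathcal T_n$ by midpoint-type subdivision into four subtriangles, the number of triangles (and hence the dimension of the Lagrange space of any fixed degree $m$) grows like $4^n$, so $\dim(S_n)\leq C\, 4^n=Ch^{-2}$ with $h=2^{-n}$. Equivalently, $h^m\leq C\dim(S_n)^{-m/2}$, and combining with C\'ea's inequality and the interpolation estimate above gives $\|u-u_n\|_{H^1(\Omega)}\leq Ch^m\leq C\dim(S_n)^{-m/2}$.

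The main obstacle is the interpolation estimate on the refined initial triangle containing an endpoint of $\gamma$, and in particular ensuring that the telescoping of layer contributions produces a clean $h^m$ rate rather than a $\sqrt{n}\,h^m$ loss: this is precisely what the weighted-space additivity $\sum_i\|\cdot\|^2_{\maK^{m+1}_{a+1}(L_i)}=\|\cdot\|^2_{\maK^{m+1}_{a+1}(T_{(0)})}$ buys us, together with the specific choice $\kappa_Q=2^{-m/a}$ in Remark \ref{rkgraded} that gives $(2\kappa_Q)^{mi}\cdot\kappa_Q^{-(a+1)i}\leq 1$ in the scaling argument and $\kappa_Q^{na}=h^m$ in the innermost layer. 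Handling the vertex triangles from the polygonal boundary (covered by Lemma \ref{r1r3} via the corner regularity $u\in\maK^{m+1}_{a+1}(R_3)$ with $a<\pi/\omega$ from Theorem \ref{ureg}) is analogous and poses no additional difficulty.
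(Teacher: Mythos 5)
Your proposal is correct and follows essentially the same route as the paper: C\'ea's inequality, a decomposition of the global interpolation error over the initial triangles, the local estimates of Lemmas \ref{r1r3}, \ref{TNtri}, and \ref{TNtri2}, and the count $\dim(S_n)=O(4^n)=O(h^{-2})$; in fact your explicit telescoping of the squared weighted norms over the mesh layers supplies a detail the paper's proof leaves implicit. The only small point you omit is the Poincar\'e inequality needed to pass from the $H^1$ seminorm bounds of Lemmas \ref{TNtri} and \ref{TNtri2} to the full $H^1$ norm, which the paper invokes explicitly.
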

\begin{proof}
By C\'ea's Theorem (see (\ref{cea})),
$$
\|u-u_n\|^2_{H^1(\Omega)}\leq C\|u-u_I\|^2_{H^1(\Omega)}=C\sum_{T_{(0)}\in\mathcal T_0}\|u-u_I\|^2_{H^1(T_{(0)})}.
$$
Based on the Poincar\'e inequality and Lemmas \ref{TNtri} and \ref{TNtri2}, if the initial triangle $T_{(0)}$ has an endpoint of $\gamma$ as a vertex, we have
$$
\|u-u_I\|^2_{H^1(T_{(0)})}\leq Ch^m=C2^{-mn}.
$$
Summing up this estimate and the estimates in Lemma \ref{r1r3}, and noting  that based on Algorithm \ref{graded} dim$S_n=O(4^n)$, we obtain
$$
\|u-u_n\|^2_{H^1(\Omega)}\leq Ch^m\leq C{\rm{dim}}(S_n)^{-\frac{m}{2}},
$$
which completes the proof.
\end{proof}

\begin{rem} The solution of equation (\ref{eq:Possion}) may possess singularities across the line segment $\gamma$, near the vertices of the domain, and near the endpoints of $\gamma$. We have derived  regularity results in weighted Sobolev spaces and  proposed numerical methods that solve equation (\ref{eq:Possion}) in the optimal convergence rate. These results can be extended to more general cases, for example, the case where the line fracture is replaced by multiple line fractures, whether intersecting or non-intersecting. With proper modifications, we also expect the analytical tools will be useful when $\gamma$ is a smooth curve and when the source term $\delta_{\gamma}$ is replace by $q\delta_{\gamma}$ for  $q\in L^2({\gamma})$.
\end{rem}

\section{Numerical examples}
\label{sec-5}
In this section, we present numerical test results to validate our theoretical predictions for the proposed finite element method solving equation (\ref{eq:Possion}). Since the solution $u$ is unknown, we use the following numerical convergence rate
\begin{eqnarray}\label{rate}
e=\log_2\frac{|u_j-u_{j-1}|_{H^1(\Omega)}}{|u_{j+1}-u_j|_{H^1(\Omega)}},
\end{eqnarray}
where $u_j$ is the finite element solution on the  mesh $\mathcal T_j$ obtained after $j$  refinements of the initial triangulation $\mathcal T_0$. According to Theorem \ref{thm.optimal}, when the  optimal convergence rate is obtained, the value of $e$ shall be close to $m$, where $m$ is the degree of the polynomial used in the numerical method. This desired rate can be achieved especially when the grading parameter near the endpoint $Q$ of $\gamma$ satisfies $\kappa_Q=2^{-\frac{m}{a}}$ for any $0<a<1$ and the grading parameter near a vertex $p$ of domain satisfies $\kappa_p<2^{-\frac{m\omega}{\pi}}$, where $\omega$ is the largest  interior angle among all the vertices of $\Omega$.

For Example \ref{P1h} and \ref{P1ex2}, we consider the finite element method based on $P_1$ polynomials for problem (\ref{eq:Possion}) in a square domain $\Omega=(0,1)^2$.

\begin{example}\label{P1h} (Union-Jack meshes and graded meshes)
In this example,  the line fracture $\gamma=Q_1Q_2$ has two   vertices $Q_1=(0.25,0.5)$ and $Q_2=(0.75,0.5)$. We use  finite element methods on
two types of triangular meshes:  the Union-Jack mesh with  elements across the line fracture $\gamma$; and the graded meshes conforming to $\gamma$ defined in Algorithm \ref{graded} with different values of the grading parameter. The initial triangulations are given in (a) and (c) of Figure \ref{Mesh_Init}, respectively, where the Union-Jack mesh has $128$ elements and the graded mesh has $64$ elements. To refine the Union-Jack mesh, each triangle is divided into four equal triangles.

Note that in the square domain, the vertices of the domain does not lead to corner singularities in $H^2$. Therefore, we  use  quasi-uniform meshes near the corners, which shall not affect the global convergence rate. However,  in the region across $\gamma$, the solution  merely belongs to $H^{\frac{3}{2}-\epsilon}$ for any $\epsilon>0$. Union-Jack mesh does not resolve the singularity across the fracture $\gamma$. Thus,  on the Union-Jack mesh, the convergence rate (\ref{rate}) of the numerical solution shall be about $0.5$. The graded mesh conforms to $\gamma$ and therefore resolves the solution singularity across $\gamma$. Based on Theorem \ref{thm.optimal}, when the grading parameter  for the endpoints of $\gamma$ satisfies $\kappa:=\kappa_{Q_1}=\kappa_{Q_2}=2^{-\frac{1}{a}}<0.5$, the singular solution near $Q_1$ and $Q_2$ shall be well approximated, which yields the optimal convergence rate in the numerical approximation.

\begin{figure}
\centering
\subfigure[]{\includegraphics[width=0.222\textwidth]{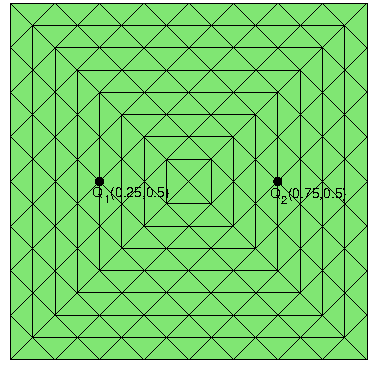}}\hspace{0.19cm}
\subfigure[]{\includegraphics[width=0.22\textwidth]{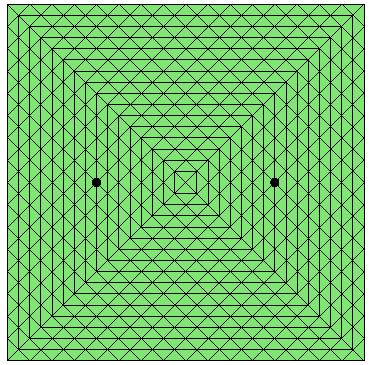}}\hspace{0.3cm}
\subfigure[]{\includegraphics[width=0.22\textwidth]{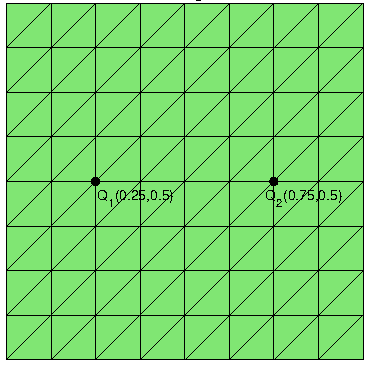}}\hspace{0.3cm}
\subfigure[]{\includegraphics[width=0.22\textwidth]{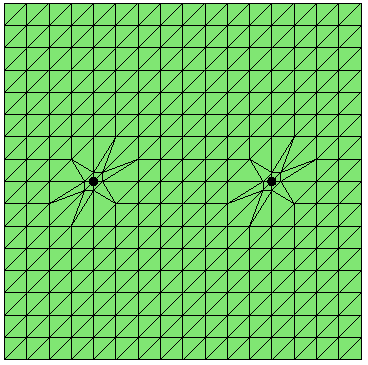}}
\caption{Graded mesh and Union-Jack mesh. (a) and (b): the initial Union-Jack mesh and the mesh after one refinement. (c) and (d): the initial graded mesh and the mesh after one refinement, $\kappa=\kappa_{Q_1}=\kappa_{Q_2}=0.2$. }\label{Mesh_Init}
\end{figure}

The convergence rate (\ref{rate}) associated with these two types of meshes are reported in Table \ref{TabConRate}. The first five rows are the rates on graded meshes, and the last row contains data on the Union-Jack mesh. Here $j$ is the number of refinements from the initial mesh. It is clear that the  rate  on a sequence of Union-Jack meshes is suboptimal with  $e=0.5$.  For graded meshes, when  $\kappa<0.5$, the convergence rate is  optimal with rate $e=1$; and the convergence is not optimal when  $\kappa=0.5$. These results are closely aligned with our aforementioned theoretical predication.

\begin{table}[!htbp]\tabcolsep0.03in
\centering
\caption{Convergence history of the numerical solution in Example \ref{P1h} with mesh refinements.}
\begin{tabular}{|l|l|} 
\hline
  $\kappa \backslash j$             & $j=2$               $j=3$       $j=4$               $j=5$         \\
\hline
$\kappa=0.1$   & 0.99  \hspace{0.07cm}     0.94     \hspace{0.07cm}        0.97     \hspace{0.07cm}     0.99 \\
\hline
$\kappa=0.2$   & 0.97   \hspace{0.07cm}   0.99    \hspace{0.07cm}        0.99       \hspace{0.07cm}   1.00 \\
\hline
$\kappa=0.3$   & 0.87   \hspace{0.07cm}    0.96      \hspace{0.07cm}       0.99    \hspace{0.07cm}      1.00 \\
\hline
$\kappa=0.4$   & 0.86  \hspace{0.07cm}    0.91    \hspace{0.07cm}         0.94     \hspace{0.07cm}     0.98 \\
\hline
$\kappa=0.5$   & 0.84   \hspace{0.07cm}    0.87     \hspace{0.07cm}       0.89     \hspace{0.07cm}     0.91 \\
\hline
\text{Union-Jack} & 0.46  \hspace{0.07cm}    0.47   \hspace{0.07cm}     0.49   \hspace{0.07cm}    0.49 \\
\hline
\end{tabular}\label{TabConRate}
\end{table}

\end{example}

\begin{example}\label{P1ex2} (Graded meshes for different fractures)
This example is to test the convergence rate on a sequence of graded meshes for problem (\ref{eq:Possion}) with the line fracture(s) at different locations. We shall use the linear finite element method and the same square domain as in  Example \ref{P1h} for all the numerical tests in this example.

\noindent\textbf{Test  1.}  Suppose we have a longer line fracture $\gamma=Q_1Q_2$ with two vertices $Q_1=(0.1, 0.5)$, $Q_2=(0.9,0.5)$. See Figure \ref{Mesh_Init2} for the initial mesh and the  graded mesh with $\kappa=0.2$ after four  refinements. The convergence rates associated with different values of $\kappa=\kappa_{Q_1}=\kappa_{Q_2}$ are reported in the second column of Table \ref{TabConRate2}. Similar to the numerical tests in Example \ref{P1h}, these results show that the  convergence rate is suboptimal with $e=0.93$ on the quasi-uniform mesh ($\kappa=0.5$), but becomes optimal  ($e=1$) on graded meshes for $\kappa<0.5$. 

\begin{figure}
\centering
\subfigure[]{\includegraphics[width=0.26\textwidth]{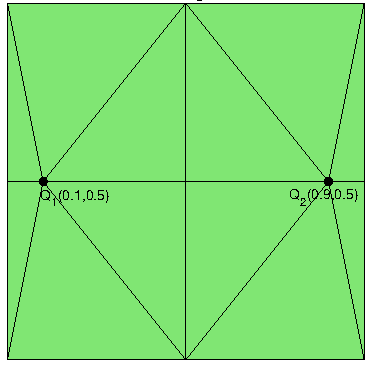}}\hspace{0.8cm}
\subfigure[]{\includegraphics[width=0.26\textwidth]{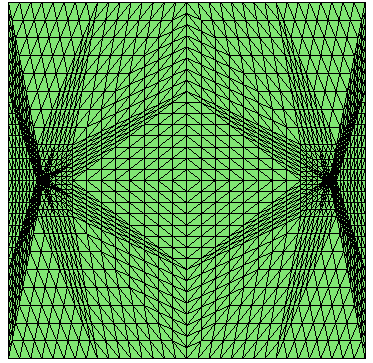}}\hspace{0.8cm}
\subfigure[]{\includegraphics[width=0.3\textwidth]{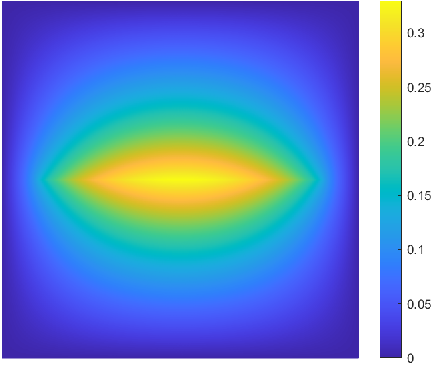}}
\caption{Graded meshes with line fracture $\gamma=Q_1Q_2$, $Q_1=(0.1,0.5)$, $Q_2=(0.9,0.5)$. (a) the initial mesh; (b) the mesh after four refinements, $\kappa=\kappa_{Q_1}=\kappa_{Q_2}=0.2$; (c) the numerical solution.}\label{Mesh_Init2}
\end{figure}

\begin{table}[!htbp]\tabcolsep0.03in
\caption{Convergence history   in Tests 1 \& 2 of Example \ref{P1ex2} on graded meshes.}
\label{TabConRate2}
\centering
\begin{tabular}{|l|l|l|}  
\hline
  $\kappa \backslash j$             & $j=4$              $j=5$       $j=6$             $j=7$  &      $j=4$              $j=5$       $j=6$             $j=7$  \\
\hline
$\kappa=0.1$   &  0.97 \hspace{0.07cm}   0.98  \hspace{0.07cm}  0.99 \hspace{0.07cm}  1.00 & 0.97  \hspace{0.07cm}  0.99  \hspace{0.07cm}  0.99  \hspace{0.07cm}  1.00 \\
\hline
$\kappa=0.2$   &  0.98  \hspace{0.07cm}  0.99  \hspace{0.07cm}  1.00 \hspace{0.07cm}  1.00 & 0.97  \hspace{0.07cm}  0.99  \hspace{0.07cm}  1.00  \hspace{0.07cm}  1.00  \\
\hline
$\kappa=0.3$   &  0.99 \hspace{0.07cm}   1.00  \hspace{0.07cm}  1.00 \hspace{0.07cm}  1.00 &   1.00  \hspace{0.07cm}  1.00  \hspace{0.07cm}  1.00  \hspace{0.07cm}  1.00 \\
\hline
$\kappa=0.4$   &  0.95  \hspace{0.07cm}  0.97  \hspace{0.07cm}  0.98  \hspace{0.07cm} 0.99 & 0.96  \hspace{0.07cm}  0.98  \hspace{0.07cm}  0.99  \hspace{0.07cm} 0.99\\
\hline
$\kappa=0.5$   &  0.91 \hspace{0.07cm}   0.92  \hspace{0.07cm}  0.93 \hspace{0.07cm}  0.93 & 0.93  \hspace{0.07cm}  0.93  \hspace{0.07cm}  0.94  \hspace{0.07cm}  0.94 \\
\hline
\end{tabular}
\end{table}


\noindent\textbf{Test 2.} We consider a line fracture $\gamma=Q_1Q_2$ with the two vertices $Q_1=(0.2, 0.2)$, $Q_2=(0.8,0.8)$.
Here we solve the problem (\ref{eq:Possion})  on graded meshes with the initial triangulation given in Figure \ref{Mesh_Init3}. The convergence rate is reported in the third column of Table \ref{TabConRate2}. We observe that convergence rate is suboptimal with $e=0.94$ on quasi-uniform mesh ($\kappa=0.5$), but it is optimal  ($e=1$) on graded meshes for $\kappa<0.5$. The results in Table \ref{TabConRate2}, both from Test 1 and Test 2, are well  predicted by the theory as discussed above.

\begin{figure}
\centering
\subfigure[]{\includegraphics[width=0.26\textwidth]{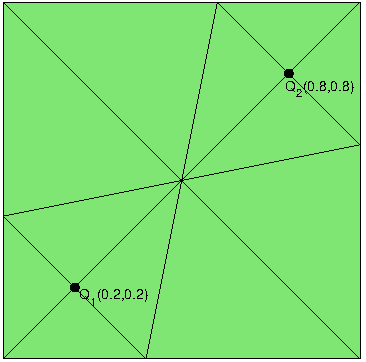}}\hspace{0.8cm}
\subfigure[]{\includegraphics[width=0.26\textwidth]{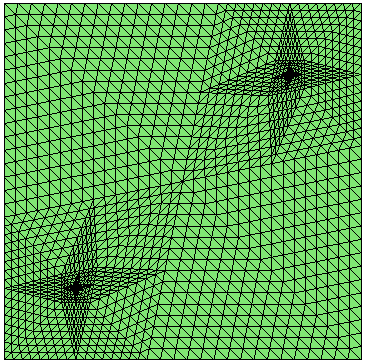}}\hspace{0.8cm}
\subfigure[]{\includegraphics[width=0.31\textwidth]{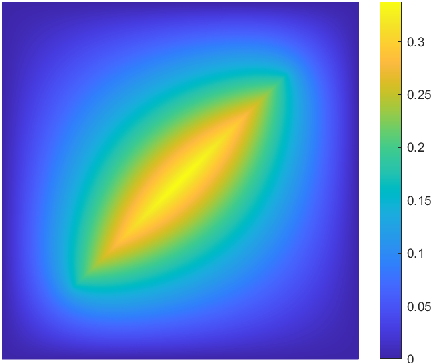}}
\caption{Graded meshes with line fracture $\gamma=Q_1Q_2$, $Q_1=(0.2,0.2)$, $Q_2=(0.8,0.8)$. (a) the initial mesh; (b) the mesh after four refinements, $\kappa=\kappa_{Q_1}=\kappa_{Q_2}=0.2$; (c) the numerical solution.}\label{Mesh_Init3}
\end{figure}



\noindent\textbf{Test 3.}  In this test, we consider two line fractures with $\gamma_1 = Q_1Q_2, \gamma_2=Q_3Q_4$ in equation (\ref{eq:Possion}). Here the vertices are $Q_1=(0.3,0.1)$, $Q_2=(0.3, 0.9)$, $Q_3=(0.6,0.1)$ and $Q_4=(0.9, 0.9)$. The initial mesh is given in Figure \ref{Mesh_Init4}.
Although two line fractures are imposed, we observe similar convergence rates: the suboptimal convergence rate with $e=0.94$ on quasi-uniform meshes ($\kappa=0.5$), and  optimal  ($e=1$) on graded meshes as $\kappa:=\kappa_{Q_1}=\kappa_{Q_2}=\kappa_{Q_3}=\kappa_{Q_4}<0.5$.

\begin{figure}
\centering
\subfigure[]{\includegraphics[width=0.26\textwidth]{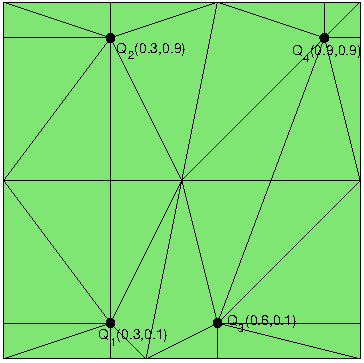}}\hspace{0.8cm}
\subfigure[]{\includegraphics[width=0.26\textwidth]{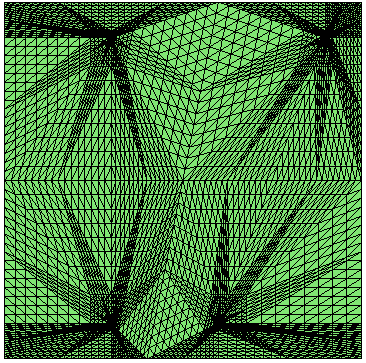}}\hspace{0.8cm}
\subfigure[]{\includegraphics[width=0.305\textwidth]{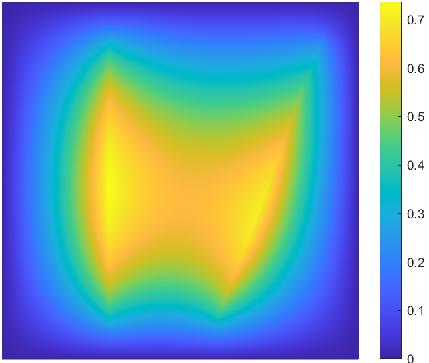}}
\caption{Graded meshes with two line fractures $\gamma_1=Q_1Q_2$ and $\gamma_2=Q_3Q_4$. (a) the initial mesh; (b) the mesh after four refinements, $\kappa=\kappa_{Q_1}=\kappa_{Q_2}=\kappa_{Q_3}=\kappa_{Q_4}=0.2$; (c) the numerical solution.}\label{Mesh_Init4}
\end{figure}

\begin{table}[!htbp]\tabcolsep0.03in
\caption{Convergence history   in Tests 3 of Example \ref{P1ex2} on graded meshes.}
\label{TabConRate4}
\centering
\begin{tabular}{|l|l|} 
\hline
  $\kappa \backslash j$             & $j=4$               $j=5$       $j=6$               $j=7$         \\
\hline
$\kappa=0.1$   & 0.98   \hspace{0.07cm}    0.99     \hspace{0.07cm}       1.00   \hspace{0.07cm}       1.00 \\
\hline
$\kappa=0.2$   &   1.00  \hspace{0.07cm}  1.00  \hspace{0.07cm}   1.00 \hspace{0.07cm}   1.00 \\
\hline
$\kappa=0.3$   &   0.99 \hspace{0.07cm}    1.00 \hspace{0.07cm}  1.00 \hspace{0.07cm}  1.00 \\
\hline
$\kappa=0.4$   & 0.96  \hspace{0.07cm}   1.00  \hspace{0.07cm}   1.00 \hspace{0.07cm}  1.00 \\
\hline
$\kappa=0.5$   &  0.92  \hspace{0.07cm}   0.93  \hspace{0.07cm}   0.93 \hspace{0.07cm}  0.94 \\
\hline
\end{tabular}
\end{table}

In Test 1 and Test 2, we have implemented linear finite element methods proposed in Algorithm \ref{graded}. These numerical test results are in strong support of the estimate in Theorem \ref{thm.optimal}. We chose the square domain to avoid the possible corner singularity due to the non-smoothness of the domain, so that we can concentrate on the singular solution in the neighborhood of the line fracture. For general polygonal domains, the  corner singularities should be taken into account. A proper refinement algorithm near these corners are also given in Remark \ref{rkgraded} and Theorem \ref{thm.optimal}.


\end{example}

\begin{example}\label{ex.3} ($P_2$ finite element methods)
In this example, we consider the finite element method based on $P_2$ polynomials for equation (\ref{eq:Possion}). To minimize the effect of potential corner singularities, we solve the equation  in the triangle domain $\Omega=\Delta ABC$ with $A=(0,0), B=(1,0)$ and $C=(0.5,1)$ and the line fracture $\gamma=Q_1Q_2$ with the two vertices $Q_1=(0.3, 0.25)$, $Q_2=(0.7,0.25)$. Since all the interior angles of $\Omega$ are less then $\frac{\pi}{2}$, the solution is in $H^3$ except for the region that contains $\gamma$.  See Figure \ref{Mesh_InitP2} for the initial triangulation that conforms to the fracture.  Based on Theorem \ref{thm.optimal}, to achieve the optimal convergence rate in the numerical approximation, it is sufficient to use quasi-uniform meshes near the vertices of the domain and use graded meshes with the grading parameter $\kappa:=\kappa_{Q_1}=\kappa_{Q_2}=2^{-\frac{2}{a}}<0.25$ due to the fact $0<a<1$.

\begin{figure}
\centering
\subfigure[]{\includegraphics[width=0.26\textwidth]{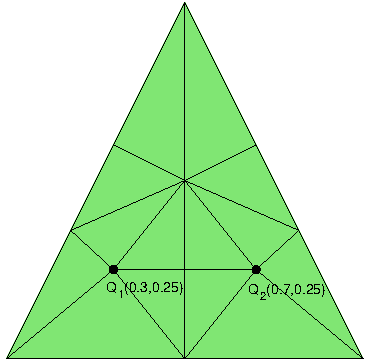}}\hspace{0.8cm}
\subfigure[]{\includegraphics[width=0.26\textwidth]{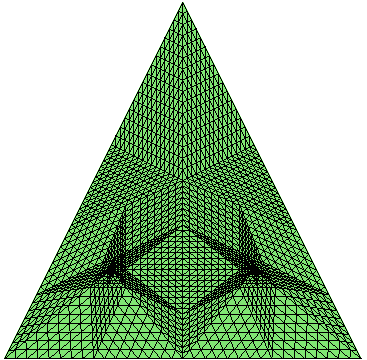}}\hspace{0.8cm}
\subfigure[]{\includegraphics[width=0.31\textwidth]{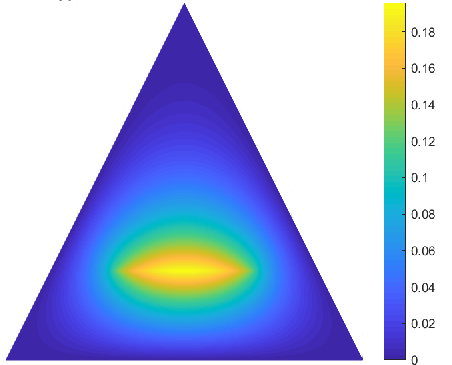}}
\caption{Quadratic finite element methods on graded meshes with the line fracture $\gamma=Q_1Q_2$, $Q_1=(0.3,0.25)$, $Q_2=(0.7,0.25)$. (a) the initial mesh; (b) the mesh after four refinements, $\kappa=\kappa_{Q_1}=\kappa_{Q_2}=0.2$; (c) the numerical solution.}\label{Mesh_InitP2}
\end{figure}

\begin{table}[!htbp]\tabcolsep0.03in
\caption{Convergence history  of the $P_2$ elements in  Example \ref{ex.3} on graded meshes.}
\label{TabConRateP2}
\centering
\begin{tabular}{|l|l|} 
\hline
$\kappa \backslash j$  &  $j=4$    $j=5$    $j=6$    $j=7$  \\
\hline
$\kappa=0.1$   &  1.74  \hspace{0.07cm}  1.86  \hspace{0.07cm}  1.94  \hspace{0.07cm}  1.97  \\
\hline
$\kappa=0.2$   &  1.81  \hspace{0.07cm}  1.88  \hspace{0.07cm}  1.93  \hspace{0.07cm}  1.97  \\
\hline
$\kappa=0.3$   &  1.65  \hspace{0.07cm} 1.68  \hspace{0.07cm} 1.70  \hspace{0.07cm} 1.71  \\
\hline
$\kappa=0.4$   &  1.32  \hspace{0.07cm}  1.32  \hspace{0.07cm}  1.32  \hspace{0.07cm}  1.32  \\
\hline
$\kappa=0.5$   &  1.00  \hspace{0.07cm}  1.00  \hspace{0.07cm}  1.00  \hspace{0.07cm}  1.00  \\
\hline
\end{tabular}
\end{table}

The convergence rate (\ref{rate}) of the numerical solution in this example is reported in Table \ref{TabConRateP2}. We observe that the convergence rate is suboptimal on graded meshes with $\kappa>0.25$. In particular, $e=1$ on quasi-uniform meshes ($\kappa=0.5$) and $1<e<2$ on  graded meshes with $\kappa=0.3, 0.4$. It is clear that the optimal convergence rate $e=2$ is obtained on graded meshes when $\kappa<0.25$. These numerical results are clearly consistent with the theory developed in this paper.

\end{example}


\section*{Acknowledgments}
This research was supported in part by the National Science Foundation Grant DMS-1819041 and by the Wayne State University Faculty Competition for Postdoctoral Fellows Award.

\bigskip

\def\cprime{$'$} \def\ocirc#1{\ifmmode\setbox0=\hbox{$#1$}\dimen0=\ht0
  \advance\dimen0 by1pt\rlap{\hbox to\wd0{\hss\raise\dimen0
  \hbox{\hskip.2em$\scriptscriptstyle\circ$}\hss}}#1\else {\accent"17 #1}\fi}

\end{document}